\newtheorem{theorem}{Theorem}[section]
\newtheorem{lemma}[theorem]{Lemma}
\newtheorem{proposition}[theorem]{Proposition}
\newtheorem{corollary}[theorem]{Corollary}
\newcounter{paraga}[section]
\renewcommand{\theparaga}{{\bf\arabic{paraga}.}}
\newcommand{\paraga}{\medskip \addtocounter{paraga}{1} 
\noindent{\theparaga\ } }
\begin{document}

\def\MP{\,{<\hspace{-.5em}\cdot}\,}
\def\SP{\,{>\hspace{-.3em}\cdot}\,}
\def\PM{\,{\cdot\hspace{-.3em}<}\,}
\def\PS{\,{\cdot\hspace{-.3em}>}\,}
\def\EP{\,{=\hspace{-.2em}\cdot}\,}
\def\PP{\,{+\hspace{-.1em}\cdot}\,}
\def\PE{\,{\cdot\hspace{-.2em}=}\,}
\def\N{\mathbb N}
\def\C{\mathbb C}
\def\Q{\mathbb Q}
\def\R{\mathbb R}
\def\T{\mathbb T}
\def\A{\mathbb A}
\def\Z{\mathbb Z}
\def\demi{\frac{1}{2}}

\begin{titlepage}
\author{Abed Bounemoura~\footnote{abedbou@gmail.com, Centre de Recerca Matemàtica, Campus de Bellaterra, Edifici C, 08193, Bellaterra}}
\title{\LARGE{\textbf{Normal forms, stability and splitting of invariant manifolds II. Finitely differentiable Hamiltonians}}}
\end{titlepage}

\maketitle

\begin{abstract}
This paper is a sequel to ``Normal forms, stability and splitting of invariant manifolds I. Gevrey Hamiltonians", in which we gave a new construction of resonant normal forms with an exponentially small remainder for near-integrable Gevrey Hamiltonians at a quasi-periodic frequency, using a method of periodic approximations. In this second part we focus on finitely differentiable Hamiltonians, and we derive normal forms with a polynomially small remainder. As applications, we obtain a polynomially large upper bound on the stability time for the evolution of the action variables and a polynomially small upper bound on the splitting of invariant manifolds for hyperbolic tori.
\end{abstract}
 
\section{Introduction and main results}\label{s1}

\paraga Let us briefly recall the setting considered in the first part of this work \cite{BouI12}. Let $n\geq 2$ be an integer, $\T^n=\R^n/\Z^n$ and $B_R$ be the closed ball in $\R^n$, centered at the origin, of radius $R>0$ with respect to the supremum norm. For $\varepsilon\geq 0$, we consider an $\varepsilon$-perturbation of an integrable Hamiltonian $h$ in angle-action coordinates, that is a Hamiltonian of the form
\begin{equation*}
\begin{cases} 
H(\theta,I)=h(I)+f(\theta,I) \\
|f| \leq \varepsilon <\!\!<1
\end{cases}
\end{equation*}
where $(\theta,I) \in \mathcal{D}_R=\T^n \times B_R$ and $f$ is a small perturbation in some suitable topology defined by a norm $|\,.\,|$. The phase space $\mathcal{D}_R$ is equipped with the symplectic structure induced by the canonical symplectic structure on $\T^n \times \R^n=T^*\T^n$. 

For $\varepsilon=0$, the action variables are integrals of motion and the phase space is then trivially foliated into invariant tori $\{I=I_0\}$, $I_0 \in B_R$, on which the flow is linear with frequency $\nabla h(I_0)$. Let us focus on the invariant torus $\mathcal{T}_0=\{I=0\}$. The qualitative and quantitative properties of this invariant torus are then determined by the Diophantine properties of its frequency vector $\omega=\nabla h(0) \in \R^n$. Let us say that a vector subspace of $\R^n$ is rational if it has a basis of vectors with rational (or equivalently, integer) components, and we let $F=F_\omega$ be the smallest rational subspace of $\R^n$ containing $\omega$. If $F=\R^n$, the vector $\omega$ is said to be non-resonant and the dynamics on the invariant torus $\mathcal{T}_0$ is then minimal and uniquely ergodic. If $F$ is a proper subspace of $\R^n$ of dimension $d$, the vector $\omega$ is said to be resonant and $d$ (respectively $m=n-d$) is the number of effective frequencies (respectively the multiplicity of the resonance): the invariant torus $\mathcal{T}_0$ is then foliated into invariant $d$-dimensional tori on which the dynamics is again minimal and uniquely ergodic. We will always assume that $1\leq d \leq n$, as the case $d=0$ is trivial since it corresponds to the zero vector and hence to an invariant torus which consists uniquely of equilibrium solutions. The special case $d=1$ will play a very important role in our approach: in this case, writing $\omega=v$ to distinguish it from the general case, $F=F_v$ is just the real line generated by $v$ so there exists $t>0$ such that $tv\in\Z^{n}\setminus\{0\}$. Letting $T>0$ be the infimum of the set of such $t$, the vector $v$ will be called $T$-periodic, and it is easy to see that the orbits of the linear flow with frequency $v$ are all periodic with minimal period $T$.  

Now, for a general vector $\omega \in \R^n \setminus\{0\}$, one can associate a constant $Q_\omega>0$ and a real-valued function $\Psi_\omega$ defined for all real numbers $Q\geq Q_\omega$, which is non-decreasing and unbounded,  by
\begin{equation}\label{psi}
\Psi_\omega(Q)=\max\left\{|k\cdot\omega|^{-1} \; | \; k\in \Z^n \cap F , \; 0<|k|\leq Q \right\}
\end{equation}
where $\cdot$ denotes the Euclidean scalar product and $|\,.\,|$ is the supremum norm for vectors. By definition, we have
\[ |k\cdot\omega|\geq \frac{1}{\Psi_\omega(Q)}, \quad k\in \Z^n \cap F, \; 0<|k|\leq Q. \]
Special classes of vectors are obtained by prescribing the growth of this function. An important class are the so-called Diophantine vectors: a vector $\omega$ is called Diophantine if there exist constants $\gamma>0$ and $\tau \geq d-1$ such that $\Psi_\omega(Q)\leq \gamma^{-1}Q^\tau$. We denote by $\Omega_d(\gamma,\tau)$ the set of such vectors. For $d=1$, recall that $\omega=v$ is $T$-periodic and it is easy to check that $\Psi_v(Q)\leq T$ and therefore any $T$-periodic vector belongs to $\Omega_1(T^{-1},0)$.

\paraga For $\varepsilon>0$, the dynamics of the perturbed system can be extremely complicated. Our aim here is to give some information on this dynamics in a neighbourhood of the unperturbed invariant torus $\mathcal{T}_0$. 

For an analytic Hamiltonian system, it is well-known that if the frequency vector is Diophantine, the system can be analytically conjugated to a simpler system where the perturbation has been split into two parts: a resonant part, which captures the important features of the system and whose size is still comparable to $\varepsilon$, and a non-resonant part, whose size can be made exponentially small with respect to $\varepsilon^{-a}$, where the exponent $a>0$ depends only on the Diophantine exponent $\tau$. The result can also be extended to an arbitrary vector $\omega\in \R^n$, in which case the non-resonant part is exponentially small with respect to some function of $\varepsilon^{-1}$, this function depending only on $\Psi_\omega$. Such simpler systems are usually called resonant formal forms with a small remainder, and, among other things, they are very important in trying to obtain stability estimates for the evolution of the action variables and ``splitting" estimates when the unperturbed invariant torus becomes ``hyperbolic" for $\varepsilon>0$.

In the first part of this work, we extend these results, which were valid for analytic Hamiltonians, to the broader class of Gevrey Hamiltonians, and for an arbitrary frequency vector $\omega$. To do this, following \cite{BN09} and \cite{BF12}, we introduced a method of periodic approximations that reduced the general case $1 \leq d \leq n$ to the periodic case $d=1$. 

Our aim in this second part is to treat finitely differentiable Hamiltonians. Of course, the exponential smallness of the remainder in the normal form we obtained in the analytic or in the Gevrey case will be replaced by a polynomial smallness. The method we will use in this second part is, in spirit, analogous to the one we used in the first part. However, the technical details are different so we need to give a complete proof, and, moreover, the method we used in the first part has to be modified in order to reach a precise polynomial estimate on the remainder in the normal form in terms of the regularity of the system. We will also give applications to stability and splitting estimates, but this will be completely analogous to the first part so we will omit the details. 

\paraga Let us now state precisely our results, starting with the regularity assumption.

Let $k\geq 2$ be an integer, and let us denote by $C^k(\mathcal{D}_R)$ the Banach space of functions on $\mathcal{D}_R$ of class $C^k$, with the norm
\[ |f|_{C^k(\mathcal{D}_R)}=\max_{l\in \N^{2n}, |l|\leq k}|\partial^l f|_{C^{0}(\mathcal{D}_R)}, \quad f \in C^k(\mathcal{D}_R). \]
Now given an integer $p \geq 1$, let us denote by $C^k(\mathcal{D}_R,\R^p)$ the Banach space of functions from $\mathcal{D}_R$ to $\R^p$ of class $C^k$, with the norm
\[ |F|_{C^k(\mathcal{D}_R,\R^p)}=\max_{1 \leq i \leq p}|f_i|_{C^k(\mathcal{D}_R)}, \quad F=(f_1,\dots,f_p) \in C^k(\mathcal{D}_R,\R^p). \]
For simplicity, we shall simply write $|\,.\,|_{k}=|\,.\,|_{C^k(\mathcal{D}_R)}$ and $|\,.\,|_{k}=|\,.\,|_{C^k(\mathcal{D}_R,\R^p)}$.  

We first consider a Hamiltonian $H \in C^{k}(\mathcal{D}_R)$ of the form
\begin{equation}\label{Hlin}
\begin{cases} \tag{C1}
H(\theta,I)=l_\omega(I)+f(\theta,I), \quad (\theta,I)\in \mathcal{D}_R, \\
|f|_{k} \leq \varepsilon, \quad k\geq 2. 
\end{cases}
\end{equation}
We denote by $\{\,.\,,\,.\,\}$ the Poisson bracket associated to the symplectic structure on $\mathcal{D}_R$. For any vector $w \in \R^n$, let $X_w^t$ be the Hamiltonian flow of the linear integrable Hamiltonian $l_w(I)=w\cdot I$, and given any $g\in C^{1}(\mathcal{D}_R)$, we define
\begin{equation}\label{ave}
[g]_w=\lim_{s\rightarrow +\infty}\frac{1}{s}\int_{0}^{s}g\circ X_{w}^{t}dt.
\end{equation}
Note that $\{g,l_w\}=0$ if and only if $g \circ X_w^t=g$ if and only if $g=[g]_w$. 

Recall that the function $\Psi_\omega$ has been defined in~\eqref{psi}, then we define the functions
\[ \Delta_\omega(Q)=Q\Psi_\omega(Q), \; Q \geq Q_\omega, \quad \Delta_{\omega}^*(x)=\sup\{Q \geq Q_\omega\; | \; \Delta_{\omega}(Q)\leq x\}, \; x \geq \Delta_\omega(Q_\omega). \]
Our first result is the following.

\begin{theorem}\label{thmlin}
Let $H$ be as in~\eqref{Hlin}, and $\kappa\in \N$ such that $0\leq \kappa \leq k-1$. There exist positive constants $c$, $c_1$, $c_2$, $c_3$ and $c_4$ that depend only on $n,R,\omega,k$ and $\kappa$ such that if 
\begin{equation}\label{thr1}
\Delta_{\omega}^*(c\varepsilon^{-1}) \geq c_1,
\end{equation}
then there exists a symplectic map $\Phi_\kappa \in C^{k-\kappa}(\mathcal{D}_{R/2},\mathcal{D}_R)$ such that
\[ H \circ \Phi_\kappa = l_\omega +[f]_\omega+ g_\kappa + f_\kappa, \quad \{g_\kappa,l_\omega\}=0  \]
with the estimates 
\begin{equation}\label{estlindist}
|\Phi_\kappa-\mathrm{Id}|_{k-\kappa} \leq c_2 \Delta_{\omega}^*(c\varepsilon^{-1})^{-1}
\end{equation} 
and
\begin{equation}\label{estlin}
|g_\kappa|_{k-\kappa+1}\leq c_3 \varepsilon\Delta_{\omega}^*(c\varepsilon^{-1})^{-1}, \quad |f_\kappa|_{k-\kappa}\leq c_4 \varepsilon (\Delta_{\omega}^*(c\varepsilon^{-1}))^{-\kappa}.
\end{equation}
\end{theorem}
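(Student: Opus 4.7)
The plan is a Kolmogorov-style iteration in the spirit of Part~I, adapted to finite differentiability by using Jackson-type smoothing in place of Gevrey smoothing. Set $Q := \Delta_\omega^*(c\varepsilon^{-1})$ and $\eta := Q^{-1}$, so that hypothesis~\eqref{thr1} reads $Q\geq c_1$. I would construct $\Phi_\kappa = X_{\chi_\kappa}^1 \circ\cdots\circ X_{\chi_1}^1$ as a composition of $\kappa$ time-one Hamiltonian flows, each of which eliminates the non-resonant angle-Fourier modes of the current perturbation truncated at level $Q$. The balance is that each step shrinks the perturbation by a factor $\eta$ and produces a new $l_\omega$-invariant increment of size $\eta$ times the current one, at the cost of one derivative; iterating $\kappa$ times then delivers the polynomial remainder bound in~\eqref{estlin}.

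For a single step, starting from $l_\omega + [f]_\omega + g_j + u_j$ with $\{g_j, l_\omega\}=0$ and $|u_j|_{k-j}\leq C\varepsilon\eta^j$, I would proceed in three sub-steps. First, split $u_j = T_Q u_j + R_Q u_j$ with a band-limited smoothing operator $T_Q$ satisfying Bernstein ($|T_Q u_j|_{k-j+s}\leq CQ^s\varepsilon\eta^j$ for $s\geq 0$) and Jackson ($|R_Q u_j|_{k-j-1}\leq CQ^{-1}\varepsilon\eta^j = C\varepsilon\eta^{j+1}$). Second, solve the cohomological equation $\{\chi_{j+1}, l_\omega\} = T_Q u_j - [T_Q u_j]_\omega$ by Fourier inversion, using the small-divisor bound $|k\cdot\omega|^{-1}\leq\Psi_\omega(Q)$ on modes $0<|k|\leq Q$ with $k\cdot\omega\neq 0$ --- directly from~\eqref{psi} for $k\in\Z^n\cap F_\omega$, and via a uniform lower bound coming from the rational structure of $F_\omega$ (equivalently, via the periodic-approximation reduction of Part~I) for $k\in\Z^n\setminus F_\omega$. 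This gives $|\chi_{j+1}|_{k-j}\leq C\Psi_\omega(Q)\varepsilon\eta^j \leq c\eta^{j+1}$, thanks to $\varepsilon\Psi_\omega(Q)\leq c\eta$, which itself follows from $\Delta_\omega(Q)=Q\Psi_\omega(Q)\leq c\varepsilon^{-1}$ by the definition of $Q$. Third, Taylor-expand $H\circ X_{\chi_{j+1}}^1 = l_\omega + [f]_\omega + g_{j+1} + u_{j+1}$, where $g_{j+1}-g_j = [T_Q u_j]_\omega - [u_j]_\omega = -[R_Q u_j]_\omega$ is the new invariant increment and $u_{j+1}$ collects $R_Q u_j$, the Poisson bracket $\{g_j + [f]_\omega, \chi_{j+1}\}$, and the quadratic Lie-series remainder of the flow. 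Product-rule estimates give $|u_{j+1}|_{k-j-1}\leq C\varepsilon\eta^{j+1}$, which closes the induction.

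After $\kappa$ iterations, $|f_\kappa|_{k-\kappa} = |u_\kappa|_{k-\kappa}\leq C^\kappa\varepsilon\eta^\kappa$; the displacement $|\Phi_\kappa-\mathrm{Id}|_{k-\kappa}$ is bounded by the geometric sum $\sum_j|X_{\chi_j}^1-\mathrm{Id}|_{k-\kappa}\leq C\eta$, dominated by the first step; and the cumulative invariant correction $g_\kappa = -\sum_{j=0}^{\kappa-1}[R_Q u_j]_\omega$ is bounded by $C\sum_j\varepsilon\eta^{j+1}\leq C\varepsilon\eta$, the dominant term $-[R_Q f]_\omega$ being controlled in $C^{k-\kappa+1}$ by combining the Jackson estimate with the fact that $[\cdot]_\omega$ restricts to the lattice $F_\omega^\perp\cap\Z^n$, which allows additional decay. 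The main obstacle will be the uniform bookkeeping of all multiplicative constants through the $\kappa$ iterations --- the Jackson and Bernstein constants, the small-divisor bound $\Psi_\omega(Q)$, the product-rule and Lie-series constants, and the uniform lower bound on $|k\cdot\omega|$ for modes $k\notin F_\omega$ --- so that they combine into a single one-step constant depending only on $n, R, \omega, k, \kappa$. The threshold $c_1$ in~\eqref{thr1} is chosen large enough that the resulting one-step contraction factor is strictly less than $1/2$, ensuring that the induction closes at every level and the final constants $c_2, c_3, c_4$ have the required form.
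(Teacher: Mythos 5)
Your scheme is not the paper's: the paper never solves a cohomological equation by Fourier inversion. It approximates $\omega$ by $d$ periodic vectors $v_1,\dots,v_d$ with periods $T_j\leq\Psi_\omega(Q)$ and $|\omega-v_j|\leq(T_jQ)^{-1}$ (Proposition~\ref{dio}), and each elementary step is a \emph{periodic} averaging (Proposition~\ref{ana}) in which $\chi=T\int_0^1(u-[u]_v)\circ X_{Tv}^t\,t\,dt$ solves $\{\chi,l_v\}=u-[u]_v$ with the clean bound $|\chi|_i\leq T|u|_i$ and \emph{no} loss of derivatives or powers of $Q$; a double induction (on $\kappa$, and inside each step on $j=1,\dots,d$, using that $H_\kappa\circ\Phi^j-f_\kappa^j$ is one degree smoother than its two terms) keeps the total loss at $\kappa$ derivatives. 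The reason for this detour is exactly the step you take for granted: your claimed estimate $|\chi_{j+1}|_{k-j}\leq C\Psi_\omega(Q)\,\varepsilon\eta^{j}$ for the Fourier-inverted solution of $\{\chi_{j+1},l_\omega\}=T_Qu_j-[T_Qu_j]_\omega$ is not justified in $C^s$ (sup-type) norms. Dividing the Fourier coefficients of a band-limited function by $ik\cdot\omega$ is a rough multiplier; its $C^0\to C^0$ norm is not the supremum $\Psi_\omega(Q)$ of the multiplier, and the available routes (summing the $\sim Q^n$ modes, or passing through $L^2$ and Bernstein) cost extra powers of $Q$ or extra derivatives. Any such loss, repeated $\kappa$ times, destroys both the sharp remainder $\varepsilon Q^{-\kappa}$ and the ``one derivative per step'' count that the theorem asserts. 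In addition, your dispatch of the modes $k\in\Z^n\setminus F_\omega$ via ``a uniform lower bound'' is false: for $\omega=(1,\alpha,0)$ with $\alpha$ irrational, $k=(p,q,1)\notin F_\omega$ yet $k\cdot\omega=p+q\alpha$ can be made arbitrarily small. Such divisors are only controlled through $\Psi_\omega$ at a dilated argument, after projecting $k$ onto $F_\omega$ and clearing the bounded denominators of that projection; this is repairable, but it is not what you wrote, and the parenthetical appeal to the periodic-approximation reduction is not an argument.

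There is also a quantitative slip in your bookkeeping of the resonant part. You take the increment of $g$ at each step to be $[T_Qu_j]_\omega-[u_j]_\omega=-[R_Qu_j]_\omega$, and for $\kappa=1$ this gives $g_1=-[R_Qu_0]_\omega$, whose $C^{k}=C^{k-\kappa+1}$ norm is only $O(\varepsilon)$ (Jackson gives decay only at the price of derivatives, and the appeal to ``additional decay'' from averaging is not substantiated), whereas~\eqref{estlin} demands $O(\varepsilon Q^{-1})$. The paper avoids this by always extracting the \emph{full} average of the current remainder: the increment is $[f_\kappa]_\omega$, which vanishes at the first step since $[f_0]_\omega=[f-[f]_\omega]_\omega=0$, so $g_1=0$, and at later steps it is already of size $\varepsilon Q^{-\kappa}$ in the correct norm. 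This last point is fixable by changing your decomposition, but the unproven small-divisor estimate in the $C^k$ category remains the essential gap in the proposal.
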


For any $0 \leq \kappa \leq k-1$, the above theorem states the existence of a symplectic conjugacy of class $C^{k-\kappa}$, close to identity, between the original Hamiltonian and a Hamiltonian which is the sum of the integrable part, the average of the perturbation whose size is of order $\varepsilon$, a resonant part which by definition Poisson commutes with the integrable part and whose size is of order $\varepsilon(\Delta_{\omega}^*(c\varepsilon^{-1}))^{-1}$, and a general part whose size is now of order $\Delta_{\omega}^*(c\varepsilon^{-1})^{-\kappa}$. The first terms of this Hamiltonian, namely $l_\omega+[f]_\omega+g$, is what is called a resonant normal form, and the last term $\tilde{f}$ is a ``small" remainder. 

For $\kappa=0$, the statement is trivial, and for $\kappa \geq 1$, the statement follows by applying $\kappa$ times an averaging procedure. The crucial point is that, using our method of periodic approximations, this averaging procedure will be further decomposed into $d$ ``elementary" periodic averaging. The outcome is that the loss of differentiability is in some sense minimal, after $\kappa$ steps, we only loose $\kappa$ derivatives independently of the vector $\omega$ and the number of effective frequencies $d$ (using a classical averaging procedure, this loss of differentiability would strongly depend on $\omega$ and $d$).  

Now in the Diophantine case, the estimates of Theorem~\ref{thmlin} can be made more explicit. Indeed, we have the upper bound $\Psi_\omega (Q)\leq \gamma^{-1}Q^{\tau}$ which gives the lower bound $\Delta_{\omega}^*(c\varepsilon^{-1}) \geq (c\gamma\varepsilon^{-1})^{\frac{1}{1+\tau}}$. The following corollary is then immediate.

\begin{corollary}\label{thmlinD}
Let $H$ be as in~\eqref{Hlin}, with $\omega \in \Omega_d(\gamma,\tau)$ and $\kappa\in \N$ such that $0\leq \kappa \leq k-1$. There exist positive constants $c$, $c_1$, $c_2$, $c_3$ and $c_4$ that depend only on $n,R,\omega,\tau,k$ and $\kappa$ such that if 
\begin{equation*}
\varepsilon \leq cc_1^{-(1+\tau)}\gamma,
\end{equation*}
then there exists a symplectic map $\Phi_\kappa \in C^{k-\kappa}(\mathcal{D}_{R/2},\mathcal{D}_R)$ such that
\[ H \circ \Phi_\kappa = l_\omega +[f]_\omega+ g_\kappa + f_\kappa, \quad \{g_\kappa,l_\omega\}=0  \]
with the estimates 
\begin{equation*}
|\Phi_\kappa-\mathrm{Id}|_{k-\kappa} \leq c_2 (c^{-1}\gamma^{-1}\varepsilon)^{\frac{1}{1+\tau}}
\end{equation*} 
and
\begin{equation*}
|g_\kappa|_{k-\kappa+1}\leq c_3 \varepsilon(c^{-1}\gamma^{-1}\varepsilon)^{\frac{1}{1+\tau}}, \quad |f_\kappa|_{k-\kappa}\leq c_4 \varepsilon (c^{-1}\gamma^{-1}\varepsilon)^{\frac{\kappa}{1+\tau}}.
\end{equation*}
\end{corollary}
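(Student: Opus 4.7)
The plan is straightforward: Corollary~\ref{thmlinD} is obtained by specializing Theorem~\ref{thmlin} to the Diophantine class, so I only need to translate the bound on $\Psi_\omega$ into a concrete lower bound on $\Delta_\omega^*$ and substitute into the three conclusions of Theorem~\ref{thmlin}.

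First I would unwind the Diophantine hypothesis. Since $\omega \in \Omega_d(\gamma,\tau)$, we have $\Psi_\omega(Q) \leq \gamma^{-1}Q^{\tau}$ for all $Q \geq Q_\omega$, hence
\[
\Delta_\omega(Q) = Q\,\Psi_\omega(Q) \leq \gamma^{-1}Q^{1+\tau}.
\]
Inverting this inequality via the definition of $\Delta_\omega^*$ as a supremum, any $Q$ satisfying $\gamma^{-1}Q^{1+\tau}\leq x$ lies in the set whose supremum defines $\Delta_\omega^*(x)$, so
\[
\Delta_\omega^*(x) \geq (\gamma x)^{\frac{1}{1+\tau}}, \qquad x \geq \Delta_\omega(Q_\omega).
\]
Applied with $x = c\varepsilon^{-1}$ this yields $\Delta_\omega^*(c\varepsilon^{-1}) \geq (c\gamma\varepsilon^{-1})^{\frac{1}{1+\tau}}$, or equivalently
\[
\Delta_\omega^*(c\varepsilon^{-1})^{-1} \leq (c^{-1}\gamma^{-1}\varepsilon)^{\frac{1}{1+\tau}}.
\]

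Next I would check the smallness threshold. The hypothesis $\varepsilon \leq c\,c_1^{-(1+\tau)}\gamma$ of the corollary is equivalent to $c\gamma\varepsilon^{-1} \geq c_1^{1+\tau}$, which by the lower bound on $\Delta_\omega^*$ implies $\Delta_\omega^*(c\varepsilon^{-1}) \geq c_1$. This is precisely condition~\eqref{thr1}, so Theorem~\ref{thmlin} applies and yields a symplectic map $\Phi_\kappa \in C^{k-\kappa}(\mathcal{D}_{R/2},\mathcal{D}_R)$ such that
\[
H\circ\Phi_\kappa = l_\omega + [f]_\omega + g_\kappa + f_\kappa, \qquad \{g_\kappa,l_\omega\}=0.
\]

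Finally I would propagate the bound on $\Delta_\omega^*(c\varepsilon^{-1})^{-1}$ into the three estimates~\eqref{estlindist}--\eqref{estlin}. Substituting directly,
\[
|\Phi_\kappa-\mathrm{Id}|_{k-\kappa} \leq c_2\,(c^{-1}\gamma^{-1}\varepsilon)^{\frac{1}{1+\tau}},
\]
\[
|g_\kappa|_{k-\kappa+1} \leq c_3\,\varepsilon\,(c^{-1}\gamma^{-1}\varepsilon)^{\frac{1}{1+\tau}}, \qquad |f_\kappa|_{k-\kappa} \leq c_4\,\varepsilon\,(c^{-1}\gamma^{-1}\varepsilon)^{\frac{\kappa}{1+\tau}},
\]
which are exactly the three estimates of the corollary. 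The constants $c,c_1,c_2,c_3,c_4$ inherit their dependence from Theorem~\ref{thmlin}, now depending also on $\tau$ through the Diophantine bound. There is no real obstacle here; the only point requiring a moment's attention is that $\Delta_\omega^*$ is defined as a supremum, so the Diophantine upper bound on $\Delta_\omega$ yields only a lower bound on $\Delta_\omega^*$, which is nevertheless exactly what is needed since all estimates in Theorem~\ref{thmlin} involve negative powers of $\Delta_\omega^*$.
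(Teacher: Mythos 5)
Your proposal is correct and follows exactly the route the paper takes: the paper derives the corollary in one line by noting that $\Psi_\omega(Q)\leq\gamma^{-1}Q^{\tau}$ yields $\Delta_\omega^*(c\varepsilon^{-1})\geq(c\gamma\varepsilon^{-1})^{\frac{1}{1+\tau}}$ and then substituting into Theorem~\ref{thmlin}, which is precisely your argument, with the threshold check made explicit.
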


\paraga We can also consider a perturbation of non-linear integrable Hamiltonian, that is a Hamiltonian $H \in C^{k}(\mathcal{D}_R)$ of the form
\begin{equation}\label{Hnonlin}
\begin{cases} \tag{C2}
H(\theta,I)=h(I)+f(\theta,I), \quad (\theta,I)\in \mathcal{D}_R, \\
\nabla h(0)=\omega, \quad |h|_{k+2} \leq 1, \quad |f|_{k} \leq \varepsilon, \quad k\geq 2. 
\end{cases}
\end{equation} 
Note that here, for reasons that will be explained below, it is more convenient to assume that the integrable Hamiltonian is $C^{k+2}$ together with a control on its $C^{k+2}$ norm. 

For a ``small" parameter $r>0$, we will focus on the domain $\mathcal{D}_r=\T^n \times B_r$, which is a neighbourhood of size $r$ of the unperturbed torus $\mathcal{T}_0=\T^n \times \{0\}$. 

Since we are interested in $r$-dependent domains in the space of action, the estimates for the derivatives with respect to the actions will have different size than the one for the derivatives with respect to the angles. To distinguish between them, we will split multi-integers $l\in \N^{2n}$ as $l=(l_1,l_2)\in \N^n \times \N^n$ so that $\partial^l =\partial_\theta^{l_1}\partial_I^{l_2}$ and $|l|=|l_1|+|l_2|$. Let us denote by $\mathrm{Id}_I$ and $\mathrm{Id}_\theta$ the identity map in respectively the action and angle space, and for a function $F$ with values in $\T^n \times \R^n$, we shall write $F=(F_\theta,F_I)$.

\begin{theorem}\label{thmnonlin}
Let $H$ be as in~\eqref{Hnonlin} and $\kappa\in \N$ such that $0\leq \kappa \leq k-1$. There exist positive constants $c$, $c_5$, $c_6$, $c_7$ and $c_8$ that depend only on $n,R,\omega,k$ and $\kappa$, such that if 
\begin{equation}\label{thr2}
\sqrt{\varepsilon}\leq r \leq R, \quad \Delta_{\omega}^*(cr^{-1}) \geq c_5, 
\end{equation}
there exists a symplectic map $\Phi_\kappa \in C^{k-\kappa}(\mathcal{D}_{r/2},\mathcal{D}_{r})$, $\Phi_\kappa=(\Phi_{\kappa,\theta},\Phi_{\kappa,I})$, such that
\[ H \circ \Phi_\kappa = h + [f]_\omega+g_\kappa+ f_\kappa, \quad \{g_\kappa,l_\omega\}=0.  \]
Moreover, for $l=(l_1,l_2)\in \N^{2n}$, we have the estimates 
\[ |\partial^l g_\kappa|_{C^0(\mathcal{D}_{r/2})}\leq c_6 r^2r^{-|l_2|}\Delta_{\omega}^*(cr^{-1})^{-1}, \] 
for $|l|\leq k-\kappa+1$, and
\[ |\partial^l(\Phi_{\kappa,I}-\mathrm{Id}_I)|_{C^0(\mathcal{D}_{r/2})} \leq c_7 rr^{-|l_2|}\Delta_{\omega}^*(cr^{-1})^{-1},\] 
\[ |\partial^l(\Phi_{\kappa,\theta}-\mathrm{Id}_\theta)|_{C^0(\mathcal{D}_{r/2})} \leq c_7r^{-|l_2|}\Delta_{\omega}^*(cr^{-1})^{-1},\]
\[ |\partial^l f_\kappa|_{C^0(\mathcal{D}_{r/2})}\leq c_8 r^2r^{-|l_2|}\Delta_{\omega}^*(cr^{-1})^{-\kappa}\]
for $|l|\leq k-\kappa$.
\end{theorem}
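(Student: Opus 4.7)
The strategy is to reduce Theorem~\ref{thmnonlin} to the linear result Theorem~\ref{thmlin} by an action-rescaling that turns the small neighbourhood $\mathcal{D}_r$ of $\mathcal{T}_0$ into the fixed-size domain $\mathcal{D}_R$ and simultaneously absorbs the non-linearity of $h$ into a perturbation of the linear flow $l_\omega$.

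Set $\rho=r/R\leq 1$ and consider $\Psi_\rho(\theta,J)=(\theta,\rho J)$ from $\mathcal{D}_R$ onto $\mathcal{D}_r$. This map is only conformally symplectic (its pullback of the canonical form equals $\rho$ times itself), but the conjugation $\Phi\mapsto\Psi_\rho\circ\Phi\circ\Psi_\rho^{-1}$ does preserve symplecticity. Writing $h_2(I)=h(I)-h(0)-\omega\cdot I$, I would introduce
\[ \tilde H(\theta,J)=\rho^{-1}\bigl(H(\theta,\rho J)-h(0)\bigr)=l_\omega(J)+\tilde f(\theta,J),\quad \tilde f=\rho^{-1}h_2(\rho J)+\rho^{-1}f(\theta,\rho J), \]
which is of the form~\eqref{Hlin} on $\mathcal{D}_R$. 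Taylor's formula combined with $|h|_{k+2}\leq 1$ controls $|\rho^{-1}h_2(\rho J)|_k$ by $O(\rho)$ (the factor $\rho^{|l_2|-1}$ produced by differentiation cancels against the Taylor bound on $h_2$ uniformly for $|l|\leq k$), while the composition rule gives $|\rho^{-1}f(\theta,\rho J)|_k\leq \rho^{-1}\varepsilon$. The hypothesis $\sqrt\varepsilon\leq r$ upgrades the latter into $\leq Rr$, yielding the clean bound $|\tilde f|_k\leq\tilde c\,r$ for some $\tilde c=\tilde c(n,R,k)$.

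With this in hand, apply Theorem~\ref{thmlin} to $\tilde H$ with $\tilde\varepsilon=\tilde c\,r$ playing the role of $\varepsilon$. The threshold~\eqref{thr1} then reads $\Delta_\omega^*(c\tilde c^{-1}r^{-1})\geq c_1$, which follows from~\eqref{thr2} after absorbing $\tilde c$ into the constants. One obtains a symplectic $\tilde\Phi_\kappa\in C^{k-\kappa}(\mathcal{D}_{R/2},\mathcal{D}_R)$ satisfying $\tilde H\circ\tilde\Phi_\kappa=l_\omega+[\tilde f]_\omega+\tilde g_\kappa+\tilde f_\kappa$ together with the estimates~\eqref{estlindist}--\eqref{estlin}. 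Setting $\Phi_\kappa=\Psi_\rho\circ\tilde\Phi_\kappa\circ\Psi_\rho^{-1}\colon\mathcal{D}_{r/2}\to\mathcal{D}_r$ yields a symplectic map; since $h_2$ is $\theta$-independent, $[\tilde f]_\omega(\theta,J)=\rho^{-1}h_2(\rho J)+\rho^{-1}[f]_\omega(\theta,\rho J)$, so multiplying the normal form identity by $\rho$ and composing with $\Psi_\rho^{-1}$ reconstructs $H\circ\Phi_\kappa=h+[f]_\omega+g_\kappa+f_\kappa$ with $g_\kappa=\rho(\tilde g_\kappa\circ\Psi_\rho^{-1})$ and $f_\kappa=\rho(\tilde f_\kappa\circ\Psi_\rho^{-1})$, the Poisson commutation being preserved because $l_\omega\circ\Psi_\rho^{-1}=\rho^{-1}l_\omega$.

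All the weighted bounds then follow by reading off powers of $\rho$. Each action derivative brings $\partial_I=\rho^{-1}\partial_J$, i.e.\ a factor $R/r$, which produces the $r^{-|l_2|}$ weights; the overall prefactor $\rho\cdot\tilde\varepsilon\sim r\cdot r$ gives the $r^2$ in the bounds on $g_\kappa$ and $f_\kappa$, while the presence of an extra $\rho$-prefactor in $\Phi_{\kappa,I}-\mathrm{Id}_I$, absent from $\Phi_{\kappa,\theta}-\mathrm{Id}_\theta$, accounts for the asymmetric pair of displayed bounds on $\Phi_\kappa-\mathrm{Id}$. The main obstacle is not analytic but purely combinatorial: accurately tracking powers of $\rho=r/R$ through each of $\tilde g_\kappa$, $\tilde f_\kappa$ and $\tilde\Phi_\kappa-\mathrm{Id}$ so that the final constants depend only on $n,R,\omega,k,\kappa$; no new analytic input beyond Theorem~\ref{thmlin} is required.
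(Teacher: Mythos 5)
Your proposal is correct and follows essentially the same route as the paper: the author's proof is precisely this localization/rescaling argument (deferred to Part I), in which the Hamiltonian on $\mathcal{D}_r$ is rescaled to a perturbation of $l_\omega$ on a fixed domain with $r$ playing the role of the small parameter (using $\sqrt{\varepsilon}\leq r$ and $|h|_{k+2}\leq 1$ to control the $C^k$ norm of the rescaled perturbation), after which Theorem~\ref{thmlin} is applied and the weighted estimates in $r^{-|l_2|}$ are read off by undoing the conformal rescaling. Your bookkeeping of the powers of $\rho$ in $g_\kappa$, $f_\kappa$ and the two components of $\Phi_\kappa-\mathrm{Id}$ matches the stated bounds, so no gap remains.
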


The proof of Theorem~\ref{thmnonlin} is an easy consequence of Theorem~\ref{thmlin}: by a localization procedure, the Hamiltonian~\eqref{Hnonlin} on the domain $\mathcal{D}_r$ is, for $r\geq \sqrt{\varepsilon}$, equivalent to the Hamiltonian~\eqref{Hlin} on the domain $\mathcal{D}_1$, but with $r$ instead of $\varepsilon$ as the small parameter. This is completely analogous to \cite{BouI12}, so we will not repeat the argument. 

The only minor difference is that we assumed $|h|_{k+2}\leq 1$ so that when reducing the proof of Theorem~\ref{thmnonlin} to Theorem~\ref{thmlin}, we obtain a perturbation for which we can still control its $C^{k}$ norm. We could have assumed $|h|_{k}\leq 1$ in \eqref{Hnonlin}, but then we would have had a control only on the $C^{k-2}$ norm of the perturbation, and so the assumptions $k \geq 2$ and $1 \leq \kappa \leq k-1$ in the above statement should have been replaced by $k \geq 4$ and $1 \leq \kappa \leq k-3$.

Now as before, in the Diophantine case we can give a more concrete statement.

\begin{corollary}\label{thmnonlinD}
Let $H$ be as in~\eqref{Hnonlin}, with $\omega \in \Omega_d(\gamma,\tau)$ and $\kappa\in \N$ such that $0\leq \kappa \leq k-1$. There exist positive constants $c$, $c_5$, $c_6$, $c_7$ and $c_8$ that depend only on $n,R,\omega,k$ and $\kappa$, such that if 
\begin{equation}\label{thr3}
\sqrt{\varepsilon}\leq r \leq R, \quad r \leq cc_5^{-(1+\tau)}\gamma,
\end{equation}
there exists a symplectic map $\Phi_\kappa \in C^{k-\kappa}(\mathcal{D}_{r/2},\mathcal{D}_{r})$, $\Phi=(\Phi_{\kappa,\theta},\Phi_{\kappa,I})$, such that
\[ H \circ \Phi_\kappa = h + [f]_\omega+g_\kappa +f_\kappa, \quad \{g_\kappa,l_\omega\}=0.  \]
Moreover, for $l=(l_1,l_2)\in \N^{2n}$, we have the estimates 
\[ |\partial^l g_\kappa|_{C^0(\mathcal{D}_{r/2})}\leq c_6 r^2r^{-|l_2|}(c^{-1}\gamma^{-1}r)^{\frac{1}{1+\tau}}\]
for $|l|\leq k-\kappa+1$, and
\[ |\partial^l(\Phi_{\kappa,I}-\mathrm{Id}_I)|_{C^0(\mathcal{D}_{r/2})} \leq c_7 rr^{-|l_2|}(c^{-1}\gamma^{-1}r)^{\frac{1}{1+\tau}},\]
\[ |\partial^l(\Phi_{\kappa,\theta}-\mathrm{Id}_\theta)|_{C^0(\mathcal{D}_{r/2})} \leq c_7r^{-|l_2|}(c^{-1}\gamma^{-1}r)^{\frac{1}{1+\tau}},\]
\[ |\partial^l f_\kappa|_{C^0(\mathcal{D}_{r/2})}\leq c_8 r^2r^{-|l_2|}(c^{-1}\gamma^{-1}r)^{\frac{\kappa}{1+\tau}}\]
for $|l|\leq k-\kappa$.
\end{corollary}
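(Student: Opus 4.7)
The plan is to derive Corollary \ref{thmnonlinD} as a direct consequence of Theorem \ref{thmnonlin}, in exactly the same way that Corollary \ref{thmlinD} follows from Theorem \ref{thmlin}. Everything reduces to translating the abstract quantity $\Delta_\omega^*(cr^{-1})$ into an explicit power of $r$ using the Diophantine assumption.

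First, I would unpack the definition. Since $\omega\in\Omega_d(\gamma,\tau)$, we have $\Psi_\omega(Q)\leq\gamma^{-1}Q^\tau$ for all $Q\geq Q_\omega$, hence $\Delta_\omega(Q)=Q\Psi_\omega(Q)\leq\gamma^{-1}Q^{1+\tau}$. By the definition of $\Delta_\omega^*$ as a supremum, this monotone upper bound on $\Delta_\omega$ yields the lower bound
\[ \Delta_\omega^*(x) \geq (\gamma x)^{\frac{1}{1+\tau}} \]
for all $x$ large enough, and in particular
\[ \Delta_\omega^*(cr^{-1}) \geq (c\gamma r^{-1})^{\frac{1}{1+\tau}}. \]

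Next I would check that the hypothesis \eqref{thr3} implies the hypothesis \eqref{thr2} of Theorem \ref{thmnonlin}. Indeed, the condition $r\leq cc_5^{-(1+\tau)}\gamma$ is equivalent to $(c\gamma r^{-1})^{1/(1+\tau)}\geq c_5$, which by the lower bound above forces $\Delta_\omega^*(cr^{-1})\geq c_5$. The condition $\sqrt{\varepsilon}\leq r\leq R$ is common to both statements. Thus Theorem \ref{thmnonlin} applies and produces the symplectic map $\Phi_\kappa\in C^{k-\kappa}(\mathcal D_{r/2},\mathcal D_r)$ together with the decomposition $H\circ\Phi_\kappa=h+[f]_\omega+g_\kappa+f_\kappa$ with $\{g_\kappa,l_\omega\}=0$.

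Finally, for the estimates, I would simply substitute the lower bound on $\Delta_\omega^*(cr^{-1})$, equivalently the upper bound
\[ \Delta_\omega^*(cr^{-1})^{-1}\leq (c^{-1}\gamma^{-1}r)^{\frac{1}{1+\tau}},\qquad \Delta_\omega^*(cr^{-1})^{-\kappa}\leq (c^{-1}\gamma^{-1}r)^{\frac{\kappa}{1+\tau}}, \]
into each of the four estimates provided by Theorem \ref{thmnonlin}. This directly yields the claimed bounds on $\partial^l g_\kappa$, $\partial^l(\Phi_{\kappa,I}-\mathrm{Id}_I)$, $\partial^l(\Phi_{\kappa,\theta}-\mathrm{Id}_\theta)$ and $\partial^l f_\kappa$ in Corollary \ref{thmnonlinD}, with new constants $c,c_5,c_6,c_7,c_8$ depending additionally on $\tau$ (absorbed into the constants since $\tau$ is fixed with $\omega$).

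Since this is just a substitution, there is no genuine obstacle; the only book-keeping concern is that the constants $c, c_5,\dots,c_8$ are allowed to be renamed when passing from Theorem \ref{thmnonlin} to Corollary \ref{thmnonlinD}, and that the resulting constants indeed depend only on $n,R,\omega,k,\kappa$ (with $\tau$ being part of the data attached to $\omega$). No part of the argument requires revisiting the proof of Theorem \ref{thmnonlin} itself.
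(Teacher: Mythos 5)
Your proposal is correct and follows exactly the route the paper intends: the paper treats Corollary~\ref{thmnonlinD} as immediate from Theorem~\ref{thmnonlin} via the Diophantine bound $\Psi_\omega(Q)\leq\gamma^{-1}Q^{\tau}$, which gives $\Delta_\omega^*(cr^{-1})\geq(c\gamma r^{-1})^{\frac{1}{1+\tau}}$, so that the threshold~\eqref{thr3} implies~\eqref{thr2} and the estimates follow by substitution. Your bookkeeping of the constants (with $\tau$ regarded as part of the data of $\omega$) matches the paper's convention, so nothing is missing.
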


\paraga Finally let us describe the plan of this paper. In \S\ref{s2}, we will deduce from our main results Theorem~\ref{thmlin} and Theorem~\ref{thmnonlin} polynomially large stability estimates for the evolution of the action variables, which are global for perturbations of linear integrable systems and only local for perturbations of non-linear integrable systems. In the first case, that is for perturbations of linear integrable systems, we will show on an example that these estimates are very accurate. We will also use Theorem~\ref{thmnonlin} to prove a result of polynomial smallness for the splitting of invariant manifolds of a hyperbolic tori. The proof of the main results will be given in \S\ref{s3}: as we already explained, it will be enough to prove Theorem~\ref{thmlin}, as Theorem~\ref{thmnonlin} follows from Theorem~\ref{thmlin} by a procedure we already described in details in the first part of this work. Finally, we will gather in a appendix some technical estimates concerning finitely differentiable functions that are used to prove our theorems.

To simplify the notations and improve the readability, when convenient we shall replace constants depending on $n,R,\omega,k$ and $\kappa$ that can, but need not be, made explicit by a $\cdot$, that is an expression of the form $u \MP v$  means that there exist a constant $c>0$, that depends on the above set of parameters, such that $u \leq cv$. Similarly, we will use the notations $u \PS v$ and $u \EP v$.

\section{Applications to stability and splitting estimates}\label{s2}

In this section, we will give consequences of our normal forms Theorem~\ref{thmlin} and Theorem~\ref{thmnonlin} to the stability of the action variables, and to the splitting of invariant manifolds of a hyperbolic tori. These applications are completely analogous to the ones contained in \cite{BouI12}, section \S 2 and \S 3, therefore we shall not repeat the details of the proofs.

\paraga Let us start by describing the stability estimates we can obtain for a perturbation of a linear integrable Hamiltonian as in~\eqref{Hlin}. Using Theorem~\ref{thmlin}, it is well-known that the action variables of the transformed Hamiltonian $H \circ \Phi_\kappa$ have only small variations in the direction given by the subspace $F=F_\omega$ during an interval of time which is as large as the inverse of the size of the remainder: this follows from the Hamiltonian form of the equations, and the fact that the Hamiltonians $[f]_\omega$ and $g$ Poisson commutes with the integrable part $l_\omega$. Moreover, using the fact that $\Phi_\kappa$ is symplectic and close to identity, the same property remains true for the Hamiltonian $H$.

Now the larger we take $\kappa$, the smaller is the size of the remainder and hence the longer is the stability time. The statement of Theorem~\ref{thmlin} allows us to take $\kappa$ as large as $k-1$, however, with this value of $\kappa$ the transformed Hamiltonian would be only $C^1$ and the existence of its Hamiltonian flow would become an issue. So we can only apply Theorem~\ref{thmlin} with $\kappa=k-2$, and this yields the following result.  

\begin{theorem}\label{stablin}
Under the notations and assumptions of Theorem~\ref{thmlin}, let $(\theta(t),I(t))$ be a solution of the Hamiltonian system defined by $H$, with $(\theta_0,I_0)\in \mathcal{D}_{R/4}$ and let $T_0$ be the smallest $t \in ]0,+\infty]$ such that $(\theta(t),I(t))\notin \mathcal{D}_{R/4}$. Then we have the estimates
\[ |\Pi_F(I(t)-I_0)| \leq \delta, \quad |t|\MP \min \left\{T_0, \delta\varepsilon^{-1}\Delta_\omega^*(\cdot\varepsilon^{-1})^{k-2}\right\} \]
for any $(\Delta_\omega^*(\cdot\varepsilon^{-1}))^{-1} \MP \delta\MP 1$. Moreover, if $F=\R^n$, then 
\[ |I(t)-I_0| \leq \delta, \quad |t|\MP \delta\varepsilon^{-1}\Delta_\omega^*(\cdot\varepsilon^{-1})^{k-2}. \]
\end{theorem}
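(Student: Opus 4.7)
My plan is to apply Theorem~\ref{thmlin} with $\kappa=k-2$, which is the largest choice ensuring that $H\circ\Phi_\kappa$ remains of class $C^{2}$ and hence possesses a well-defined Hamiltonian flow. Writing $\Phi:=\Phi_{k-2}$ and $\tilde f:=f_{k-2}$, on $\mathcal{D}_{R/2}$ one has
\[
H\circ\Phi \;=\; N+\tilde f, \qquad N:=l_\omega+[f]_\omega+g_{k-2}, \qquad \{N,l_\omega\}=0,
\]
with $|\Phi-\mathrm{Id}|_{2}\MP(\Delta_\omega^*(\cdot\,\varepsilon^{-1}))^{-1}$ and $|\tilde f|_{2}\MP\varepsilon(\Delta_\omega^*(\cdot\,\varepsilon^{-1}))^{-(k-2)}$. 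Provided $c_{1}$ in~\eqref{thr1} is taken large enough, $\Phi$ is $C^{0}$-close to the identity, so its inverse is well defined on, say, $\mathcal{D}_{R/3}$ with values in $\mathcal{D}_{R/2}$.

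The key observation I would use is that $\{N,l_\omega\}=0$ already forces $\Pi_F\partial_\theta N\equiv 0$. Indeed, $\omega\cdot\partial_\theta N=0$ means $N$ is invariant under the flow $X_\omega^t$; since $F=F_\omega$ is the smallest rational subspace of $\R^n$ containing $\omega$, Kronecker--Weyl (applied inside $F$) tells us that each orbit of $X_\omega^t$ on $\T^n$ is dense in a translate of the rational subtorus $F/(F\cap\Z^n)$, and continuity of $N$ in $\theta$ then gives $N(\theta+v,I)=N(\theta,I)$ for every $v\in F$, i.e.\ $\partial_\theta N$ takes values in $F^\perp$. In the new coordinates $(\tilde\theta(t),\tilde I(t)):=\Phi^{-1}(\theta(t),I(t))$, which are well defined for $|t|\leq T_0$, Hamilton's equation for $H\circ\Phi$ thus reduces to
\[
\Pi_F\dot{\tilde I}(t) \;=\; -\Pi_F\partial_{\tilde\theta}\tilde f\bigl(\tilde\theta(t),\tilde I(t)\bigr),
\]
whose right-hand side is bounded in supremum norm by $|\tilde f|_{2}\MP\varepsilon(\Delta_\omega^*(\cdot\,\varepsilon^{-1}))^{-(k-2)}$.

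Integrating in time over $[0,t]$ would then yield $|\Pi_F(\tilde I(t)-\tilde I_0)|\MP |t|\,\varepsilon\,(\Delta_\omega^*(\cdot\,\varepsilon^{-1}))^{-(k-2)}$. Combining this with $|I-\tilde I|\leq|\Phi-\mathrm{Id}|_{0}\MP(\Delta_\omega^*(\cdot\,\varepsilon^{-1}))^{-1}$ at both times $0$ and $t$, and using the lower bound $(\Delta_\omega^*(\cdot\,\varepsilon^{-1}))^{-1}\MP\delta$ to absorb the two transfer terms, the triangle inequality gives $|\Pi_F(I(t)-I_0)|\leq\delta$ for $|t|\MP\min\{T_0,\,\delta\varepsilon^{-1}(\Delta_\omega^*(\cdot\,\varepsilon^{-1}))^{k-2}\}$, which is the first estimate. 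In the non-resonant case $F=\R^n$ one has $\Pi_F=\mathrm{Id}$, so the full action drift is controlled; taking the implicit constant in $\delta\MP 1$ small enough, a standard continuity bootstrap shows that $(\theta(t),I(t))$ cannot leave $\mathcal{D}_{R/4}$ during the whole window $|t|\MP\delta\varepsilon^{-1}(\Delta_\omega^*(\cdot\,\varepsilon^{-1}))^{k-2}$, so the $T_0$ constraint becomes vacuous and can be dropped. I expect the main subtlety to be this domain bookkeeping---making sure $\Phi$, $\Phi^{-1}$ and the solution remain in compatible neighbourhoods throughout---while the Poisson-commuting structure of the normal form does the substantive work.
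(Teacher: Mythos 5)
Your proposal is correct and follows essentially the same route the paper indicates (and defers to \cite{BouI12} for): apply Theorem~\ref{thmlin} with $\kappa=k-2$ to keep a $C^2$ normal form with well-defined flow, use $\{N,l_\omega\}=0$ (your Kronecker--Weyl density argument for $\Pi_F\partial_\theta N=0$ is the standard mechanism, usually phrased via Fourier modes in $F^\perp$) to reduce the drift of $\Pi_F\tilde I$ to the remainder $\tilde f$, and transfer back through the near-identity symplectic conjugacy. The only wording to adjust is in the final bootstrap: when $F=\R^n$ one shows the orbit cannot leave a slightly enlarged domain such as $\mathcal{D}_{R/3}$ (where the normal-form estimates still apply), not $\mathcal{D}_{R/4}$ itself, which is precisely why the constraint $|t|\leq T_0$ can be dropped.
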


\begin{corollary}\label{stablinD}
Under the notations and assumptions of Corollary~\ref{thmlinD}, let $(\theta(t),I(t))$ be a solution of the Hamiltonian system defined by $H$, with $(\theta_0,I_0)\in \mathcal{D}_{R/4}$ and let $T_0$ be the smallest $t \in ]0,+\infty]$ such that $(\theta(t),I(t))\notin \mathcal{D}_{R/4}$. Then we have the estimates
\[ |\Pi_F(I(t)-I_0)| \leq \delta, \quad |t|\MP \min \left\{T_0, \delta\varepsilon^{-1}(\gamma\varepsilon^{-1})^{\frac{k-2}{1+\tau}}\right\} \]
for any $(\gamma^{-1}\varepsilon)^{\frac{1}{1+\tau}} \MP \delta\MP 1$. Moreover, if $F=\R^n$, then 
\[ |I(t)-I_0| \leq \delta, \quad |t|\MP \delta\varepsilon^{-1}(\gamma\varepsilon^{-1})^{\frac{k-2}{1+\tau}}. \]
\end{corollary}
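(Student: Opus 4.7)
The plan is to simply transport the conclusions of Theorem~\ref{stablin} through the Diophantine estimate on $\Psi_\omega$, exactly as Corollary~\ref{thmlinD} is deduced from Theorem~\ref{thmlin}. Since the hypothesis $\omega \in \Omega_d(\gamma,\tau)$ in Corollary~\ref{thmlinD} is strictly stronger than the hypothesis of Theorem~\ref{thmlin} (the smallness condition $\varepsilon \leq c c_1^{-(1+\tau)}\gamma$ implies \eqref{thr1}), I may apply Theorem~\ref{stablin} verbatim to the same solution $(\theta(t),I(t))$ and obtain the abstract estimate
\[ |\Pi_F(I(t)-I_0)|\leq \delta,\qquad |t|\MP\min\!\left\{T_0,\ \delta\varepsilon^{-1}\Delta_\omega^*(\cdot\,\varepsilon^{-1})^{k-2}\right\}, \]
valid for $(\Delta_\omega^*(\cdot\,\varepsilon^{-1}))^{-1}\MP\delta\MP 1$, and the analogous global statement when $F=\R^n$.

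It then remains to replace $\Delta_\omega^*$ by its explicit lower bound in the Diophantine regime. From the defining inequality $\Psi_\omega(Q)\leq\gamma^{-1}Q^\tau$ one gets
\[ \Delta_\omega(Q)=Q\Psi_\omega(Q)\leq \gamma^{-1}Q^{1+\tau},\qquad Q\geq Q_\omega, \]
and inverting this monotone relation yields
\[ \Delta_\omega^*(c\varepsilon^{-1})\geq (c\gamma\varepsilon^{-1})^{\frac{1}{1+\tau}}\EP (\gamma\varepsilon^{-1})^{\frac{1}{1+\tau}}, \]
where the constant $c$ has been absorbed in the $\EP$ notation. Substituting this lower bound in the range of admissible $\delta$ gives the required restriction $(\gamma^{-1}\varepsilon)^{\frac{1}{1+\tau}}\MP\delta$, and substituting it in the stability time produces
\[ \delta\varepsilon^{-1}\Delta_\omega^*(\cdot\,\varepsilon^{-1})^{k-2}\SP \delta\varepsilon^{-1}(\gamma\varepsilon^{-1})^{\frac{k-2}{1+\tau}}, \]
which is precisely the time scale stated in the corollary.

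There is essentially no obstacle: the entire content of the argument is the explicit inversion of $\Delta_\omega$ in the Diophantine case, and the fact that Theorem~\ref{stablin} was stated in a form designed to make this substitution immediate. The only mild care needed is to check that the quantifiers on $\delta$ remain consistent after the substitution, i.e.\ that the new lower bound $(\gamma^{-1}\varepsilon)^{\frac{1}{1+\tau}}$ is indeed dominated by $1$ under the smallness assumption on $\varepsilon$ (which is clear from $\varepsilon\MP\gamma$); both the localized ($F\subsetneq\R^n$) and the global ($F=\R^n$) conclusions follow by the same one-line substitution.
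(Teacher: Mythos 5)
Your argument is correct and is exactly the paper's (implicit) route: Corollary~\ref{stablinD} is obtained from Theorem~\ref{stablin} by the same substitution $\Delta_\omega^*(c\varepsilon^{-1})\geq (c\gamma\varepsilon^{-1})^{\frac{1}{1+\tau}}$, coming from $\Psi_\omega(Q)\leq\gamma^{-1}Q^\tau$, that the paper uses to pass from Theorem~\ref{thmlin} to Corollary~\ref{thmlinD}. You also correctly check the direction of the inequalities (the Diophantine lower bound on $\delta$ and the Diophantine time scale are respectively stronger and smaller than the abstract ones), so nothing is missing.
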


Note that in the particular case where $\delta \EP (\Delta_\omega^*(\cdot\varepsilon^{-1}))^{-1}$, we have 
\[ |\Pi_F(I(t)-I_0)| \MP (\Delta_\omega^*(\cdot\varepsilon^{-1}))^{-1}\]
for
\[|t|\MP \min\left\{T_0,\varepsilon^{-1}\Delta_\omega^*(\cdot\varepsilon^{-1})^{k-3}\right\} \]
and in the Diophantine case, for $\delta \EP (\gamma^{-1}\varepsilon)^{\frac{1}{1+\tau}}$, we have
\[ |\Pi_F(I(t)-I_0)| \MP (\gamma^{-1}\varepsilon)^{\frac{1}{1+\tau}}\]
for
\[|t|\MP \min\left\{T_0,\cdot\varepsilon^{-1}(\gamma\varepsilon^{-1})^{\frac{k-3}{1+\tau}}\right\}. \]

\paraga Next, we will show on an example that the estimates of Theorem~\ref{stablin}, and therefore the estimate on the remainder of Theorem~\ref{thmlin}, are almost ``essentially" optimal. 

\begin{theorem}\label{explin}
Let $\omega\in \R^n\setminus\{0\}$. Then there exist a sequence $(\varepsilon_j)_{j\in\N}$ of positive real numbers and a sequence $(f_j)_{j\in\N}$ of functions in $C^{\infty}(\mathcal{D}_R)$, with 
\[ \varepsilon_j \MP |f_j|_{k} \MP \varepsilon_j, \quad \lim_{j \rightarrow +\infty}\varepsilon_j=0,\] 
such that for $j \PS 1$, the Hamiltonian system defined by $H_j=l_\omega+f_j$ has solutions $(\theta(t),I(t))$ which satisfy
\[ |t|\varepsilon_j (\Delta_\omega^*(\cdot\varepsilon_j^{-1}))^{-(k-1)} \MP |\Pi_F(I(t)-I_0)| \MP |t|\varepsilon_j (\Delta_\omega^*(\cdot\varepsilon_j^{-1}))^{-(k-1)}. \]
\end{theorem}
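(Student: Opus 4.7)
The plan is to realise the saturation in Theorem~\ref{thmlin} by the simplest possible construction: a single-mode cosine perturbation aligned with the near-resonant integer vector that defines $\Psi_\omega$ at a well-chosen scale $Q_j$. Because such a perturbation depends only on $\theta$, the equation $\dot\theta = \omega$ integrates at once and the action drift can be written in closed form. The interplay between the $C^k$-size of the perturbation (which gives a factor $|k_j|^{-k}$) and the smallness of the divisor $|k_j\cdot\omega|$ is exactly what generates the exponent $-(k-1)$ in the final estimate.

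First I pick the resonant data. Since $\Psi_\omega$ is non-decreasing, I take an increasing sequence $Q_j \to +\infty$ together with vectors $k_j \in \Z^n \cap F$ satisfying $|k_j| \le Q_j$ and $|k_j\cdot\omega| = \Psi_\omega(Q_j)^{-1}$; a standard best-approximation argument in the lattice $\Z^n \cap F$ lets me further assume $|k_j| \EP Q_j$ (in the case $d \geq 2$ one picks $Q_j$ along the jumps of $\Psi_\omega$, and in the periodic case $d = 1$ one simply takes $k_j = m_j v_0$ with $m_j \EP Q_j$ for $v_0$ a primitive lattice generator of $F$). Set
\[ \varepsilon_j := \Delta_\omega(Q_j)^{-1} = \bigl(Q_j\Psi_\omega(Q_j)\bigr)^{-1}, \]
so that $\varepsilon_j \to 0$ and, directly from the definition of $\Delta_\omega^*$, there is a constant $c > 0$ with $\Delta_\omega^*(c\varepsilon_j^{-1}) \EP Q_j$.

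Next, define the perturbation $f_j(\theta) := A_j \cos(2\pi k_j\cdot\theta)$ with $A_j := \varepsilon_j Q_j^{-k}$; this is smooth, and since some component of $k_j$ has absolute value $\EP Q_j$ one checks $|f_j|_k \EP A_j|k_j|^k \EP \varepsilon_j$. The Hamiltonian equations for $H_j = l_\omega + f_j$ read $\dot\theta = \omega$ and $\dot I = 2\pi A_j k_j \sin(2\pi k_j\cdot\theta)$. Fixing $\theta_0$ with $2\pi k_j\cdot\theta_0 = \pi/2$ and integrating along $\theta(t) = \theta_0 + \omega t$ yields the closed form
\[ I(t) - I_0 = \frac{A_j k_j}{k_j\cdot\omega}\sin(2\pi k_j\cdot\omega\, t). \]
In the range $|t| \MP \Psi_\omega(Q_j)$, which is nonempty for $j \PS 1$, the elementary inequality $(2/\pi)|x| \le |\sin x| \le |x|$ for $|x|\le \pi/2$ gives the two-sided bound
\[ |I(t) - I_0| \EP A_j|k_j|\,|t| \EP \varepsilon_j Q_j^{-(k-1)}|t|. \]
Since $k_j \in F$ one has $\Pi_F(I(t) - I_0) = I(t) - I_0$, and substituting $\Delta_\omega^*(c\varepsilon_j^{-1}) \EP Q_j$ yields exactly the claimed two-sided inequality of the theorem.

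The main obstacle is combinatorial rather than analytic: one must select $(Q_j, k_j)$ so that $|k_j| \EP Q_j$ and $\Delta_\omega^*(c\varepsilon_j^{-1}) \EP Q_j$ hold \emph{simultaneously} with uniform constants, so that the chain of $\EP$ above is genuinely two-sided rather than a one-sided bound. This is ensured by picking $(Q_j)$ either along the jumps of $\Psi_\omega$ (when $d\geq 2$) or along integer multiples of a primitive lattice vector in $F$ (when $d=1$); in both cases the monotonicity of $\Delta_\omega$ delivers the matching lower and upper comparisons for $\Delta_\omega^*$. Once this is guaranteed, everything reduces to the explicit quadrature above.
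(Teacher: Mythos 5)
Your construction is correct and is essentially the proof the paper intends: the paper simply refers to the single-resonance example of part I \cite{BouI12}, re-estimated in the $C^k$ norm, which is exactly your one-mode perturbation $A_j\cos(2\pi k_j\cdot\theta)$ with $k_j\in\Z^n\cap F$ near-resonant, $A_j\EP\varepsilon_j|k_j|^{-k}$, and drift rate $A_j|k_j|\EP\varepsilon_j Q_j^{-(k-1)}$ with $Q_j=\Delta_\omega^*(\varepsilon_j^{-1})$. One cosmetic caveat: in the periodic case $d=1$ your two selection requirements are incompatible ($\Psi_\omega$ is then bounded, so $|k_j\cdot\omega|=\Psi_\omega(Q_j)^{-1}$ forces $|k_j|\EP 1$, while $k_j=m_jv_0$ gives $|k_j\cdot\omega|\EP Q_j$), so in that branch the two-sided estimate holds only for $|t|\MP Q_j^{-1}$ rather than $|t|\MP\Psi_\omega(Q_j)$ --- harmless for the statement as written, since the identity $|k_j\cdot\omega|=\Psi_\omega(Q_j)^{-1}$ is never used in the final computation (only $|k_j|\EP Q_j$ and $k_j\cdot\omega\neq 0$ are), but the time-range remark should be adjusted accordingly.
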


The example is exactly the same as in the corresponding statement in \cite{BouI12}, the only difference being, of course, that we estimate here its $C^k$ norm instead of its Gevrey norm.

It is easy to observe that there is a little discrepancy between Theorem~\ref{stablin} and Theorem~\ref{explin}: in Theorem~\ref{stablin} we find the exponent $k-2$, while in Theorem~\ref{explin} the exponent is $k-1$. This can be explained as follows. We already know why we could not reach the exponent $k-1$ in Theorem~\ref{stablin} in general, but if the Hamiltonian is not just $C^k$ and $C^k$-close to the integrable but $C^{\infty}$ (in fact, $C^{k+1}$ would be enough) and $C^k$-close to the integrable, we could actually reach this exponent $k-1$ in Theorem~\ref{stablin}. Now the Hamiltonian which appears in Theorem~\ref{explin} is indeed smooth (it is in fact analytic) and $C^k$-close to integrable, and this explains why the exponent is $k-1$.

\paraga We also have stability estimates for a perturbation of a non-linear integrable Hamiltonian as in~\eqref{Hnonlin}, but unfortunately we do not know how to construct an example to see if these estimates are very accurate.

\begin{theorem}\label{stabnonlin}
Under the notations and assumptions of Theorem~\ref{thmnonlin}, let $(\theta(t),I(t))$ be a solution of the Hamiltonian system defined by $H$, with $(\theta_0,I_0)\in \mathcal{D}_{r/4}$ and let $T_0$ be the smallest $t \in ]0,+\infty]$ such that $(\theta(t),I(t))\notin \mathcal{D}_{r/4}$. Then we have the estimates
\[ |\Pi_F(I(t)-I_0)| \leq \delta, \quad |t|\MP \min \left\{T_0, \delta r^{-2}\Delta_\omega^*(\cdot r^{-1})^{k-2}\right\} \]
for any $r(\Delta_\omega^*(\cdot r^{-1}))^{-1} \MP \delta\MP r$. Moreover, if $F=\R^n$, then 
\[ |I(t)-I_0| \leq \delta, \quad |t|\MP \delta r^{-2}\Delta_\omega^*(\cdot r^{-1})^{k-2}. \]
\end{theorem}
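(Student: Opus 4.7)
The plan is to imitate, in the nonlinear case, the derivation of Theorem~\ref{stablin} from Theorem~\ref{thmlin}, replacing $\varepsilon$ by $r$ and invoking Theorem~\ref{thmnonlin} with the largest admissible value $\kappa=k-2$. As already noted in the excerpt, one cannot take $\kappa=k-1$: the conjugated Hamiltonian would then only be $C^1$, and its Hamiltonian flow would not be guaranteed in the classical sense. Applying Theorem~\ref{thmnonlin} at $\kappa=k-2$ (the smallness assumption~\eqref{thr2} being part of our hypotheses) produces a symplectic map $\Phi_\kappa\in C^{2}(\mathcal{D}_{r/2},\mathcal{D}_{r})$ and a normal form
\[ \tilde H := H\circ\Phi_\kappa = h + [f]_\omega + g_\kappa + f_\kappa, \qquad \{[f]_\omega,l_\omega\}=\{g_\kappa,l_\omega\}=0, \]
together with the two estimates that will be used below, namely $|\partial_\theta f_\kappa|_{C^0(\mathcal{D}_{r/2})}\MP r^{2}\Delta_\omega^*(\cdot r^{-1})^{-(k-2)}$ (taking $l=(1,0)$ in the theorem) and $|\Phi_{\kappa,I}-\mathrm{Id}_I|_{C^0(\mathcal{D}_{r/2})}\MP r\,\Delta_\omega^*(\cdot r^{-1})^{-1}$.

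Next I would study the transformed trajectory $(\tilde\theta(t),\tilde I(t))=\Phi_\kappa^{-1}(\theta(t),I(t))$, which solves Hamilton's equations for $\tilde H$ for $|t|\leq T_0$, so that the original orbit remains in $\mathcal{D}_{r/4}\subset\mathcal{D}_{r/2}$. The key algebraic observation is that any function $u$ satisfying $\{u,l_\omega\}=0$ is invariant under $X_\omega^t$, hence its Fourier expansion in $\theta$ is supported on $\{k\in\Z^n \; | \; k\cdot\omega=0\}$, which by the very definition of $F$ as the smallest rational subspace containing $\omega$ coincides with $\Z^n\cap F^\perp$. Consequently $\partial_\theta u$ takes values in $F^\perp$ and $\Pi_F(\partial_\theta u)=0$. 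Applying this to $u\in\{[f]_\omega,g_\kappa\}$ and using that $h$ is $\theta$-independent, Hamilton's equation for $\tilde H$ reduces on $F$ to $\Pi_F(\dot{\tilde I})=-\Pi_F(\partial_\theta f_\kappa)$, and integration yields $|\Pi_F(\tilde I(t)-\tilde I_0)|\MP |t|\,r^{2}\,\Delta_\omega^*(\cdot r^{-1})^{-(k-2)}$.

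It remains to transfer this bound to the original coordinates. Using the closeness of $\Phi_{\kappa,I}$ to $\mathrm{Id}_I$ both at time $0$ and at time $t$, the triangle inequality gives
\[ |\Pi_F(I(t)-I_0)| \MP r\,\Delta_\omega^*(\cdot r^{-1})^{-1} + |t|\,r^{2}\,\Delta_\omega^*(\cdot r^{-1})^{-(k-2)}, \]
and choosing $|t|\MP \delta r^{-2}\Delta_\omega^*(\cdot r^{-1})^{k-2}$ bounds the second term by $\delta$ while the lower hypothesis $\delta \SP r\,\Delta_\omega^*(\cdot r^{-1})^{-1}$ bounds the first term by $\delta$. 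When $F=\R^n$ the same argument controls the full action $I(t)$, so if $\delta$ is of order at most $r$ the orbit cannot exit $\mathcal{D}_{r/2}$ and the cutoff $|t|\leq T_0$ becomes unnecessary, giving the unconstrained statement. The main obstacle is quite modest and essentially a bookkeeping one: one must keep track of the domain of definition of $\Phi_\kappa$ throughout the time interval, which is precisely what the $T_0$-cutoff ensures in the resonant case; no new analytic idea is needed beyond Theorem~\ref{thmnonlin} and the Fourier observation above, which is why the paper can defer to~\cite{BouI12} for the full details.
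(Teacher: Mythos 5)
Your argument is correct and follows essentially the same route the paper takes (and defers to \cite{BouI12} for): apply Theorem~\ref{thmnonlin} with $\kappa=k-2$, use that Hamiltonians Poisson-commuting with $l_\omega$ have Fourier support in $\Z^n\cap F^{\perp}$ so that $\Pi_F\partial_\theta([f]_\omega+g_\kappa)=0$, integrate $\Pi_F\dot{\tilde I}=-\Pi_F\partial_\theta f_\kappa$, and transfer back through the near-identity symplectic map. The only points left implicit --- the continuity/bootstrap argument keeping the transformed orbit in $\mathcal{D}_{r/2}$ (and, when $F=\R^n$, removing the $T_0$ cutoff using $\delta\MP r$) and absorbing implicit constants so as to get $\leq\delta$ --- are exactly the routine bookkeeping the paper also leaves to the reader.
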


\begin{corollary}\label{stabnonlinD}
Under the notations and assumptions of Corollary~\ref{thmnonlinD}, let $(\theta(t),I(t))$ be a solution of the Hamiltonian system defined by $H$, with $(\theta_0,I_0)\in \mathcal{D}_{r/4}$ and let $T_0$ be the smallest $t \in ]0,+\infty]$ such that $(\theta(t),I(t))\notin \mathcal{D}_{r/4}$. Then we have the estimates
\[ |\Pi_F(I(t)-I_0)| \MP \delta, \quad |t|\MP \min \left\{T_0, \delta r^{-2}(\gamma r^{-1})^{\frac{k-2}{1+\tau}}\right\} \]
for any $r(\gamma^{-1}r)^{\frac{1}{1+\tau}} \MP \delta\MP r$. Moreover, if $F=\R^n$, then 
\[ |I(t)-I_0| \MP \delta, \quad |t|\MP \delta r^{-2}(\gamma r^{-1})^{\frac{k-2}{1+\tau}}. \]
\end{corollary}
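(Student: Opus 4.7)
The plan is to deduce Corollary~\ref{stabnonlinD} directly from Theorem~\ref{stabnonlin} by specializing the abstract function $\Delta_\omega^*$ to the Diophantine case; no new dynamical argument is needed. The only work is to translate the threshold~\eqref{thr2} and the estimates of Theorem~\ref{stabnonlin} into the explicit polynomial form~\eqref{thr3}.

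First I would recall the elementary lower bound on $\Delta_\omega^*$ for Diophantine vectors. For $\omega\in \Omega_d(\gamma,\tau)$, the definition~\eqref{psi} combined with $\Psi_\omega(Q)\leq \gamma^{-1}Q^{\tau}$ yields
\[ \Delta_\omega(Q)=Q\Psi_\omega(Q)\leq \gamma^{-1}Q^{1+\tau}. \]
Since $\Delta_\omega$ is non-decreasing, inverting this bound gives
\[ \Delta_\omega^*(x)\geq (\gamma x)^{\frac{1}{1+\tau}} \]
for every $x\geq \Delta_\omega(Q_\omega)$ large enough, which is exactly the monotone inversion that was already used to derive Corollary~\ref{thmnonlinD} from Theorem~\ref{thmnonlin}.

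Next I would check that the hypothesis~\eqref{thr3} implies the hypothesis~\eqref{thr2}. Applying the above lower bound with $x=cr^{-1}$, the condition $\Delta_\omega^*(cr^{-1})\geq c_5$ is guaranteed as soon as $(c\gamma r^{-1})^{\frac{1}{1+\tau}}\geq c_5$, i.e.\ $r\leq cc_5^{-(1+\tau)}\gamma$; together with $\sqrt{\varepsilon}\leq r\leq R$ this is precisely~\eqref{thr3}. Hence Theorem~\ref{stabnonlin} applies, with the same solution $(\theta(t),I(t))$ and the same exit time $T_0$.

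Finally I would substitute $\Delta_\omega^*(\cdot\, r^{-1})\geq (\gamma r^{-1})^{\frac{1}{1+\tau}}$ (up to a multiplicative constant absorbed into $\MP$) everywhere in the conclusion of Theorem~\ref{stabnonlin}. The admissible range $r(\Delta_\omega^*(\cdot\, r^{-1}))^{-1}\MP \delta\MP r$ becomes $r(\gamma^{-1}r)^{\frac{1}{1+\tau}}\MP \delta\MP r$, and the time bound $\delta r^{-2}\Delta_\omega^*(\cdot\, r^{-1})^{k-2}$ becomes $\delta r^{-2}(\gamma r^{-1})^{\frac{k-2}{1+\tau}}$, which is the claimed estimate in both the resonant and the totally non-resonant case $F=\R^n$. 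The only potential obstacle is bookkeeping of the constants hidden in the symbol $\MP$: one must verify that substituting a Diophantine lower bound for $\Delta_\omega^*$ preserves the dependence of the implicit constants on $n,R,\omega,k,\kappa$ alone, with the new dependence on $\tau$ entering only through the exponent $(1+\tau)^{-1}$; this is immediate since $\tau$ enters only via the monotone inversion step above.
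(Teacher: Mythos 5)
Your derivation is correct and is exactly the route the paper intends: the corollary is the Diophantine specialization of Theorem~\ref{stabnonlin}, obtained by the same monotone inversion $\Psi_\omega(Q)\leq\gamma^{-1}Q^\tau \Rightarrow \Delta_\omega^*(cr^{-1})\geq (c\gamma r^{-1})^{\frac{1}{1+\tau}}$ already used to pass from Theorem~\ref{thmlin} to Corollary~\ref{thmlinD}, which converts~\eqref{thr3} into~\eqref{thr2} and shrinks (hence preserves) the admissible ranges of $\delta$ and $|t|$. Nothing further is needed.
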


\paraga Finally, we address the problem of estimating the ``splitting" of invariant manifolds provided a Hamiltonian system as in~\ref{Hnonlin} has a suitable ``hyperbolic" invariant torus of dimension $d$, where $1 \leq d \leq n-1$, coming from the unperturbed invariant torus $\mathcal{T}_0$. Such a torus will have stable and unstable manifolds which are Lagrangian, and assuming that these manifolds intersect, our aim is to estimate their ``angle" at the intersection point. Using the Lagrangian character of the invariant manifolds, as in \cite{LMS03} we can define a symmetric matrix of size $n$, called the splitting matrix, whose eigenvalues are called splitting angles, and our main result (Theorem~\ref{thmsplit} below) states that at least $d$ splitting angles are polynomially small.

Without loss of generality, we may already assume that our frequency vector $\omega$ is of the form $\omega=(\varpi,0)\in \R^d\times \R^m=\R^n$, with $\varpi \in \R^d$ non-resonant. Then we split our angle-action coordinates $(\theta,I) \in \mathcal{D}_R=\T^n \times B_R$ accordingly to  $\omega=(\varpi,0)\in \R^d\times \R^m=\R^n$: we write $\theta=(\theta_1,\theta_2) \in \T^d \times \T^m$ and $I=(I_1,I_2) \in B_R^d \times B_R^m$, where $B_R^d=B_R \cap \R^d$ and $B_R^m=B_R \cap \R^m$.

We will first consider an abstract Hamiltonian $\mathcal{H}=\mathcal{H}_{\lambda,\mu} \in C^2(\mathcal{D}_1)$, depending on two parameters $\lambda>0$ and $\mu>0$, of the form
\begin{equation}\label{Hres}
\begin{cases} 
\mathcal{H}_{\lambda,\mu}(\theta,I)=\mathcal{H}_{\lambda}(\theta_2,I)+\mu F(\theta,I), \quad |F|_{2}\MP 1  \\
\mathcal{H}_{\lambda}(\theta_2,I)=\mathcal{H}^{av}(\theta_2,I)+\lambda R(\theta_2,I), \quad |R|_{2} \MP 1\\
\mathcal{H}^{av}(\theta_2,I)=\sqrt{\varepsilon}^{-1}\varpi\cdot I_1+AI_1\cdot I_1+BI_2\cdot I_2+V(\theta_2), \quad |V|_{2} \leq 1.
\end{cases}
\end{equation}   
This Hamiltonian $\mathcal{H}_{\lambda,\mu}$ has to be considered as an arbitrary $\mu$-perturbation of $\mathcal{H}_{\lambda}=\mathcal{H}_{\lambda,0}$, and $\mathcal{H}_{\lambda}$ as a special $\lambda$-perturbation of the ``averaged" system $\mathcal{H}^{av}$ (special because the perturbation $R$ is independent of $\theta_1$). Note that the averaged system can be further decomposed as a sum of two Hamiltonians
\[ \mathcal{H}^{av}(\theta_2,I)=K(I_1)+P(\theta_2,I_2),   \]
where $K(I_1)=\sqrt{\varepsilon}^{-1}\varpi\cdot I_1+AI_1\cdot I_1$ is a completely integrable system on $\mathcal{D}_1^d=\T^d \times B_1^d$ and $P(\theta_2,I_2)=BI_2\cdot I_2+V(\theta_2) $ is a mechanical system (or a ``multidimensional pendulum") on $\mathcal{D}_1^m=\T^m \times B_1^m$.

Our first assumption is the following one:

\medskip

(A.1) The matrix $B$ is positive definite (or negative definite), and the function $V : \T^m \rightarrow \R$ has a non-degenerated maximum (or minimum).

\medskip

Without loss of generality, we may assume that $V$ reaches its maximum at $\theta_2=0$, so that $O=(0,0)\in \mathcal{D}_1^m$ is a hyperbolic fixed point for the Hamiltonian flow generated by $P$. This in turns implies that the set $\mathcal{T}=\{I_1=0\} \times O$ is a $d$-dimensional torus invariant for the averaged system, which is quasi-periodic with frequency $\sqrt{\varepsilon}^{-1}\varpi$, and it is hyperbolic in the sense that it has $C^1$ stable and unstable manifolds
\[ W^{\pm}(\mathcal{T})=\{I_1=0\} \times W^{\pm}(O) \]
where $W^{\pm}(O)$ are the stable and unstable manifolds of $O$, which are Lagrangian. 

Now this picture is easily seen to persist if we move from $\mathcal{H}^{av}$ to $\mathcal{H}_{\lambda}$. Indeed, since $\mathcal{H}_{\lambda}$ is still independent of $\theta_1$, the level sets of $I_1$ are still invariant, hence the Hamiltonian flow generated by the restriction of $\mathcal{H}_{\lambda}$ to $\{I_1=0\} \times \mathcal{D}_1^m$ (considered as a flow on $\mathcal{D}_1^m$) is a $\lambda$-perturbation (in the $C^1$-topology) of the Hamiltonian flow generated by $P$: as a consequence, it has a hyperbolic fixed point $O_\lambda \in \mathcal{D}_1^m$ which is $\lambda$-close to $O$, for $\lambda$ small enough. Hence $\mathcal{T}_\lambda=\{I_1=0\} \times O_\lambda$ is invariant under the Hamiltonian flow of $\mathcal{H}_{\lambda}$, quasi-periodic with frequency $\sqrt{\varepsilon}^{-1}\varpi$ and it is hyperbolic with Lagrangian stable and unstable manifolds
\[ W^{\pm}(\mathcal{T}_\lambda)=\{I_1=0\} \times W^{\pm}(O_\lambda). \] 

Our second assumption concerns the persistence of the torus $\mathcal{T}_\lambda$, as well as its stable and unstable manifolds, when we move from $\mathcal{H}_{\lambda}$ to $\mathcal{H}_{\lambda,\mu}$:  

\medskip

(A.2) For any $0 \leq \lambda \MP 1$ and $0 \leq \mu \MP \lambda$, the system $\mathcal{H}_{\lambda,\mu}$ has an invariant torus $\mathcal{T}_{\lambda,\mu}$, with $\mathcal{T}_{\lambda,0}=\mathcal{T}_{\lambda}$, of frequency $\sqrt{\varepsilon}^{-1}\varpi$, with $C^1$ stable and unstable manifolds $W^{\pm}(\mathcal{T}_{\lambda,\mu})$ which are exact Lagrangian graphs over fixed relatively compact domains $U^{\pm} \subseteq \T^n$. Moreover, $W^{\pm}(\mathcal{T}_{\lambda,\mu})$ are $\mu$-close to $W^{\pm}(\mathcal{T}_{\lambda})$ for the $C^{1}$-topology.

\medskip

Let us denote by $S^{\pm}_{\lambda,\mu}$ generating functions for $W^{\pm}(\mathcal{T}_{\lambda,\mu})$ over $U^{\pm}$, that is if $V^{\pm}=U^{\pm} \times B_1$, then 
\[ W^{\pm}(\mathcal{T}_{\lambda,\mu}) \cap V^{\pm}=\{ (\theta_*,I_*) \in V^{\pm} \; | \: I_*=\partial_\theta S^{\pm}_{\lambda,\mu}(\theta_*)  \} \]
where $S^{\pm}_{\lambda,\mu} : U^{\pm} \rightarrow \R$ are $C^2$ functions. Since  $W^{\pm}(\mathcal{T}_{\lambda,\mu})$ are $\mu$-close to $W^{\pm}(\mathcal{T}_{\lambda})$ for the $C^{1}$-topology, the first derivatives of the functions $S^{\pm}_{\lambda,\mu}$ are $\mu$-close to the first derivatives of $S^{\pm}_{\lambda}=S^{\pm}_{\lambda,0}$ for the $C^1$-topology.

Finally, our last assumption concerns the existence of orbits which are homoclinic to $\mathcal{T}_{\lambda,\mu}$:

\medskip

(A.3) For any $0 \leq \lambda \MP 1$ and $0 \leq \mu \MP \lambda$, the set $W^+(\mathcal{T}_{\lambda,\mu}) \cap W^-(\mathcal{T}_{\lambda,\mu}) \setminus \mathcal{T}_{\lambda,\mu}$ is non-empty. 

\medskip

Let $\gamma_{\lambda,\mu}$ be an orbit in $W^+(\mathcal{T}_{\lambda,\mu}) \cap W^-(\mathcal{T}_{\lambda,\mu}) \setminus \mathcal{T}_{\lambda,\mu}$, and $p_{\lambda,\mu}=\gamma_{\lambda,\mu}(0)=(\theta_{\lambda,\mu},I_{\lambda,\mu})$. Since $p_{\lambda,\mu}$ is a homoclinic point, $\theta_{\lambda,\mu} \in U^+ \cap U^-$ and $\partial_\theta S^{+}_{\lambda,\mu}(\theta_{\lambda,\mu})=\partial_\theta S^{-}_{\lambda,\mu}(\theta_{\lambda,\mu})$. Then we can define the splitting matrix $M(\mathcal{T}_{\lambda,\mu},p_{\lambda,\mu})$ of $\mathcal{T}_{\lambda,\mu}$ at the point $p_{\lambda,\mu}$, as the symmetric square matrix of size $n$
\[ M(\mathcal{T}_{\lambda,\mu},p_{\lambda,\mu})= \partial_\theta^2 (S_{\lambda,\mu}^+-S_{\lambda,\mu}^-)(\theta_{\lambda,\mu}). \]
Moreover, for $1\leq i \leq n$, we define the splitting angles $a_i(\mathcal{T}_{\lambda,\mu},p_{\lambda,\mu})$ as the eigenvalues of the matrix $M(\mathcal{T}_{\lambda,\mu},p_{\lambda,\mu})$.

Now exactly as in \cite{LMS03} for the analytic case or \cite{BouI12} for the Gevrey case, we have the following result.

\begin{theorem}\label{thmsplit0}
Let $\mathcal{H}_{\lambda,\mu}$ be as in~\eqref{Hres}, and assume that (A.1), (A.2) and (A.3) are satisfied. Then, with the previous notations, we have the estimates
\[ |a_i(\mathcal{T}_{\lambda,\mu},p_{\lambda,\mu})| \MP \mu, \quad 1 \leq i \leq d.\]
\end{theorem}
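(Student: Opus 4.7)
The plan is to mimic the geometric argument from \cite{LMS03} and \cite{BouI12}, which, in contrast to the stability part, works in any regularity class provided one has $C^2$ generating functions. I would first exploit the product structure of the unperturbed invariant manifolds at $\mu=0$ to show that the splitting matrix has $d$ zero eigenvalues there, and then perturb in $\mu$ using the Hamilton-Jacobi equation.

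When $\mu=0$, the Hamiltonian $\mathcal{H}_{\lambda}$ is independent of $\theta_1$, so $I_1$ is a first integral of its flow, and the invariant manifolds $W^{\pm}(\mathcal{T}_{\lambda})=\{I_1=0\}\times W^{\pm}(O_{\lambda})$ lie in $\{I_1=0\}$. Any generating function $S^{\pm}_{\lambda}=S^{\pm}_{\lambda,0}$ therefore depends only on $\theta_2$, so writing $S^\pm_\lambda(\theta_1,\theta_2)=s^\pm_\lambda(\theta_2)$,
\[ \partial^2_\theta\bigl(S^+_\lambda-S^-_\lambda\bigr)=\begin{pmatrix} 0 & 0 \\ 0 & \partial^2_{\theta_2}(s^+_\lambda-s^-_\lambda) \end{pmatrix}, \]
whose first $d$ rows and columns vanish. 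Hence at $\mu=0$ the splitting matrix has at least $d$ zero eigenvalues.

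For $\mu>0$, assumption (A.2) gives the $C^0$-estimate $\partial_\theta S^{\pm}_{\lambda,\mu}-\partial_\theta S^{\pm}_{\lambda}=O(\mu)$ on $U^+\cap U^-$, hence $\partial_{\theta_1}S^{\pm}_{\lambda,\mu}=O(\mu)$ in $C^0$; but since the splitting matrix involves second derivatives of $\Delta S=S^+_{\lambda,\mu}-S^-_{\lambda,\mu}$, $C^1$-closeness alone is not enough. I would bridge the gap by differentiating the Hamilton-Jacobi identity $\mathcal{H}_{\lambda,\mu}(\theta,\partial_\theta S^{\pm}_{\lambda,\mu}(\theta))=E$ with respect to each $\theta_{1,j}$, $1\leq j\leq d$, and exploiting that $\mathcal{H}_\lambda$ is independent of $\theta_1$:
\[ \mu\,\partial_{\theta_{1,j}} F\bigl(\theta,\partial_\theta S^{\pm}_{\lambda,\mu}\bigr)+\partial_I\mathcal{H}_{\lambda,\mu}\bigl(\theta,\partial_\theta S^{\pm}_{\lambda,\mu}\bigr)\cdot\partial_{\theta_{1,j}}\partial_\theta S^{\pm}_{\lambda,\mu}=0. \]
Subtracting the $+$ and $-$ identities at the homoclinic point $\theta_{\lambda,\mu}$, where $\partial_\theta S^+_{\lambda,\mu}=\partial_\theta S^-_{\lambda,\mu}$, and using that the $I_1$-component of $\partial_I\mathcal{H}_{\lambda,\mu}$ is dominated by $\sqrt\varepsilon^{-1}\varpi$, one extracts an $O(\mu)$-bound for the $d\times n$ block $(M_{11}\ M_{12})$ of $M(\mathcal{T}_{\lambda,\mu},p_{\lambda,\mu})$ corresponding to $\theta_1$-derivatives. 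Writing the resulting symmetric matrix in block form
\[ M(\mathcal{T}_{\lambda,\mu},p_{\lambda,\mu})=\begin{pmatrix} M_{11} & M_{12} \\ M_{12}^T & M_{22} \end{pmatrix},\qquad \|M_{11}\|+\|M_{12}\|\MP \mu, \]
a Weyl perturbation argument relative to $\mathrm{diag}(0,M_{22})$ then produces $d$ eigenvalues of $M$ at distance $\MP\mu$ from zero, which are precisely $a_i(\mathcal{T}_{\lambda,\mu},p_{\lambda,\mu})$ for $1\leq i\leq d$.

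The main obstacle is the perturbation step, since (A.2) a priori controls only first derivatives of $S^{\pm}_{\lambda,\mu}$; without the Hamilton-Jacobi equation and the $\theta_1$-independence of $\mathcal{H}_\lambda$ that produces the prefactor $\mu$, there would be no direct way to bound the $\theta_1$-block of the splitting matrix. The remainder is routine linear algebra, identical to the analytic and Gevrey versions, so in the paper itself one may simply invoke \cite{LMS03,BouI12}.
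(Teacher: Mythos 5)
Your skeleton --- vanishing of the $\theta_1$-rows and columns of $\partial^2_\theta(S^+_\lambda-S^-_\lambda)$ at $\mu=0$, an $O(\mu)$ bound on that block for $\mu>0$, and a Weyl comparison of $M(\mathcal{T}_{\lambda,\mu},p_{\lambda,\mu})$ with $\mathrm{diag}(0,M_{22})$ --- is exactly the argument of \cite{LMS03} and \cite{BouI12} that the paper invokes, and your first and last steps are correct. The genuine gap is the bridge you insert in the middle. When you subtract the two differentiated Hamilton--Jacobi identities at the homoclinic point $\theta_{\lambda,\mu}$, where $\partial_\theta S^+_{\lambda,\mu}=\partial_\theta S^-_{\lambda,\mu}$, the terms $\mu\,\partial_{\theta_{1,j}}F(\theta,\partial_\theta S^\pm_{\lambda,\mu})$ are evaluated at the same point and cancel \emph{exactly}, as do the two vectors $w^\pm=\partial_I\mathcal{H}_{\lambda,\mu}(\theta,\partial_\theta S^\pm_{\lambda,\mu})$. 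What survives is the exact relation $w\cdot\partial_{\theta_{1,j}}\partial_\theta(S^+_{\lambda,\mu}-S^-_{\lambda,\mu})(\theta_{\lambda,\mu})=0$: each of the first $d$ rows of $M$ is orthogonal to the single vector $w$ (differentiating in all $n$ directions just recovers the classical fact $Mw=0$, i.e.\ one zero eigenvalue in the flow direction). No factor $\mu$ remains, and one scalar relation per row does not bound the norm of the block $(M_{11}\ M_{12})$ --- it is perfectly compatible with that block being of order $1$. The size of the component $\sqrt{\varepsilon}^{-1}\varpi$ of $\partial_{I_1}\mathcal{H}_{\lambda,\mu}$ does not help, since it enters only through this same single pairing; so the step ``one extracts an $O(\mu)$-bound for the $d\times n$ block'' is unjustified as written.

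The gap is also unnecessary, because you under-read (A.2). The manifolds $W^\pm(\mathcal{T}_{\lambda,\mu})$ \emph{are} the graphs of $\partial_\theta S^\pm_{\lambda,\mu}$, so their $\mu$-closeness in the $C^1$-topology is closeness of these graph maps in $C^1$; the paper states this explicitly right after (A.2): the first derivatives of $S^\pm_{\lambda,\mu}$ are $\mu$-close to those of $S^\pm_\lambda$ \emph{for the $C^1$-topology}, i.e.\ $|\partial^2_\theta S^\pm_{\lambda,\mu}-\partial^2_\theta S^\pm_\lambda|_{C^0}\MP\mu$, not merely the $C^0$-estimate you allow yourself. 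Combined with your first step ($S^\pm_\lambda$ independent of $\theta_1$), this immediately gives $\|M_{11}\|+\|M_{12}\|\MP\mu$ at every point of $U^+\cap U^-$, in particular at $\theta_{\lambda,\mu}$, and your Weyl (or min--max) step then concludes. So the fix is simply to drop the Hamilton--Jacobi detour and use (A.2) as stated; with only $C^0$-control of $\partial_\theta S^\pm_{\lambda,\mu}$ the theorem would not follow from your argument.
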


Now let us come back to a Hamiltonian system as in~\eqref{Hnonlin}, and we first make a simplifying assumption on the integrable part $h$:

\medskip

(A.4) The quadratic part of $h$ at $0 \in B_R$ can be written as
\[ \left( \begin{array}{cc}
A & 0  \\
0 & B  \\
\end{array} \right)\] 
where $A$ and $B$ are square matrix of size respectively $d$ and $m$.

\medskip

Under this assumption, it is easy to see that the Hamiltonian $H \circ \Phi_{k-2}$, given by Theorem~\ref{thmnonlin} with the value $r=2\sqrt{\varepsilon}$, can be written as in~\eqref{Hres}, with
\begin{equation}\label{mu}
\lambda=(\Delta_{\omega}^*(\cdot\sqrt{\varepsilon}^{-1}))^{-1}, \quad \mu=\lambda^{k-2}=(\Delta_{\omega}^*(\cdot\sqrt{\varepsilon}^{-1}))^{-k+2}, 
\end{equation}
provided that we rescale the time by $\sqrt{\varepsilon}$, and provided that $k \geq 3$ (we need $k\geq 3$ to insure that we have a control on the $C^2$ norm of the Hamiltonian $R$ in~\eqref{Hres}). We refer to \cite{BouI12} for more details.

Now as the solutions of the Hamiltonian system defined by $H \circ \Phi_{k-2}$ differs from those of $\mathcal{H}_{\lambda,\mu}$ (with $\lambda$ and $\mu$ as in~\eqref{mu}) only by a time change, $\mathcal{T}_{\lambda,\mu}$ is still an invariant hyperbolic torus for $H \circ \Phi$, with the same stable and unstable manifolds. Coming back to our original system, the torus $T_{\varepsilon}=\Phi(\mathcal{T}_{\lambda,\mu})$ is hyperbolic for $H$, with stable and unstable manifolds $W^{\pm}(T_{\varepsilon})=\Phi(W^{\pm}(\mathcal{T}_{\lambda,\mu}))$, and for $\gamma_{\varepsilon}=\Phi(\gamma_{\lambda,\mu})$ and $p_{\varepsilon}=\Phi(p_{\lambda,\mu})$ we can define a splitting matrix $M(\mathcal{T}_{\varepsilon},p_{\varepsilon})$ and splitting angles $a_i(\mathcal{T}_{\varepsilon},p_{\varepsilon})$ for $1\leq i \leq n$.

\begin{theorem}\label{thmsplit}
Let $H$ be as in~\eqref{Hnonlin}, with $r=2\sqrt{\varepsilon}$ satisfying~\eqref{thr2} and $k \geq 3$. Assume that (A.4) is satisfied, and that (A.1), (A.2) and (A.3) are satisfied for the Hamiltonian $\mathcal{H}_{\lambda,\mu}$ with $\lambda$ and $\mu$ as in~\eqref{mu}. Then, with the previous notations, we have the estimates
\[ |a_i(\mathcal{T}_{\varepsilon},p_{\varepsilon})| \MP \sqrt{\varepsilon}\left(1+(\Delta_{\omega}^*(\cdot\sqrt{\varepsilon}^{-1}))^{-1}\right) (\Delta_{\omega}^*(\cdot\sqrt{\varepsilon}^{-1}))^{-k+2}, \quad 1 \leq i \leq d.\]
\end{theorem}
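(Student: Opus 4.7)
The plan is to invoke Theorem~\ref{thmsplit0} on a rescaled version of $H\circ\Phi_{k-2}$, where $\Phi_{k-2}$ is the symplectic conjugacy given by Theorem~\ref{thmnonlin}, and then push the resulting bound on the splitting angles back through both the rescaling and $\Phi_{k-2}$. The argument is the direct counterpart of the one carried out in~\cite{BouI12} for the Gevrey case, so I will concentrate on its logical structure.

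First I would apply Theorem~\ref{thmnonlin} with $\kappa=k-2$ and $r=2\sqrt\varepsilon$. Since $k\geq 3$, this yields a symplectic map $\Phi_{k-2}\in C^{2}(\mathcal{D}_{\sqrt\varepsilon},\mathcal{D}_{2\sqrt\varepsilon})$ with $H\circ\Phi_{k-2}=h+[f]_\omega+g_{k-2}+f_{k-2}$, where $g_{k-2}$ is controlled up to $C^3$ by $r^2\lambda$ and $f_{k-2}$ up to $C^2$ by $r^2\mu$, with $\lambda$ and $\mu$ as in~\eqref{mu}. Next I would perform the canonical rescaling $I=rJ$ together with a time rescaling by $\sqrt\varepsilon$. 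Expanding $h$ at the origin using assumption (A.4), and using the fact that $[f]_\omega$ depends only on $\theta_2$ and $I$ since $\omega=(\varpi,0)$ with $\varpi$ non-resonant, the rescaled Hamiltonian acquires the integrable part $\sqrt\varepsilon^{-1}\varpi\cdot I_1+AI_1\cdot I_1+BI_2\cdot I_2$ of~\eqref{Hres}, a potential $V(\theta_2)$ from the $J$-independent piece of $r^{-2}[f]_\omega(\theta_2,rJ)$, a term of $C^2$-norm $\MP\lambda$ from $r^{-2}g_{k-2}(\theta,rJ)$, and a term of $C^2$-norm $\MP\mu$ from $r^{-2}f_{k-2}(\theta,rJ)$. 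The cubic Taylor tail of $h$ and the $J$-dependent part of $[f]_\omega$ produce a further remainder of size $\MP\sqrt\varepsilon\MP\lambda$, which is absorbed into $\lambda R$ since $\Psi_\omega$ is bounded below on $[Q_\omega,+\infty[$. The derivative estimates of Theorem~\ref{thmnonlin} are precisely what is needed to verify the $C^2$ bounds on the rescaled quantities, and so the rescaled Hamiltonian falls in the class~\eqref{Hres} with the parameters~\eqref{mu}.

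Hypotheses (A.1)--(A.3) then let me apply Theorem~\ref{thmsplit0} to the rescaled system, giving $|a_i(\mathcal{T}_{\lambda,\mu},p_{\lambda,\mu})|\MP\mu=\lambda^{k-2}$ for $1\leq i\leq d$. It remains to transport this estimate back to the original coordinates. Reversing the action rescaling $I=rJ$ multiplies the generating functions of $W^\pm(\mathcal{T}_{\lambda,\mu})$ by $r=2\sqrt\varepsilon$, and hence their Hessians — that is, the splitting matrix — by the same factor; this produces the $\sqrt\varepsilon$ prefactor in the announced bound. Composing the resulting invariant manifolds with $\Phi_{k-2}$ perturbs their generating functions in $C^2$ by a quantity of size $\MP\lambda$, thanks to the estimate on $\Phi_{k-2}-\mathrm{Id}$ in Theorem~\ref{thmnonlin} (read in the rescaled variables, where the $|l_2|$-loss in $r$ is exactly compensated by the chain-rule factor). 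Standard matrix perturbation then yields a multiplicative correction $1+O(\lambda)=1+O((\Delta_\omega^*(\cdot\sqrt\varepsilon^{-1}))^{-1})$ on the splitting angles, which accounts for the remaining factor in the final bound.

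The hardest step, as in~\cite{BouI12}, is this last piece of bookkeeping: checking that under the symplectic map $\Phi_{k-2}$ (which is only $C^2$ here, since $\kappa=k-2$) and under the action rescaling, the splitting matrix transforms in the claimed fashion, so that the perturbation coming from $\Phi_{k-2}$ enters multiplicatively as $1+O(\lambda)$ rather than additively and does not overwhelm the small factor $\mu$. This rests on the Lagrangian character of $W^\pm(\mathcal{T}_{\lambda,\mu})$, the exact symplecticity and near-identity nature of $\Phi_{k-2}$, and purely linear-algebra perturbation arguments which are insensitive to the regularity class; the verification therefore reduces to the ones already carried out in~\cite{LMS03} and~\cite{BouI12}.
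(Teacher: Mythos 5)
Your proposal follows exactly the route the paper takes: apply Theorem~\ref{thmnonlin} with $\kappa=k-2$ and $r=2\sqrt{\varepsilon}$, rescale actions and time so that $H\circ\Phi_{k-2}$ becomes a Hamiltonian of the form~\eqref{Hres} with $\lambda,\mu$ as in~\eqref{mu}, invoke Theorem~\ref{thmsplit0} under (A.1)--(A.3), and transport the resulting bound on the splitting angles back through the rescaling and the near-identity symplectic map $\Phi_{k-2}$, with the multiplicative $1+O(\lambda)$ bookkeeping deferred to \cite{LMS03} and \cite{BouI12}. This is precisely the paper's own (sketched) argument, so your proof is correct and essentially identical in approach.
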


The proof is analogous to \cite{BouI12}. In the Diophantine case, we have the following obvious corollary.

\begin{corollary}\label{thmsplitD}
Let $H$ be as in~\eqref{Hnonlin}, with $\omega \in \Omega_d(\gamma,\tau)$, $r=2\sqrt{\varepsilon}$ satisfying~\eqref{thr3} and $k \geq 3$. Assume that (A.4) is satisfied, and that (A.1), (A.2) and (A.3) are satisfied for the Hamiltonian $\mathcal{H}_{\lambda,\mu}$ with $\lambda$ and $\mu$ as in~\eqref{mu}. Then, with the previous notations, we have the estimates
\[ |a_i(\mathcal{T}_{\varepsilon},p_{\varepsilon})| \MP \sqrt{\varepsilon}\left(1+(\cdot\gamma^{-2}\varepsilon)^{\frac{1}{2(1+\tau)}}\right) (\cdot\gamma^{-2}\varepsilon)^{\frac{k-2}{2(1+\tau)}}, \quad 1 \leq i \leq d.\]
\end{corollary}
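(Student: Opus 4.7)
The plan is to obtain Corollary~\ref{thmsplitD} as a direct specialization of Theorem~\ref{thmsplit}, with the only task being to replace the abstract quantity $\Delta_{\omega}^*(\cdot\sqrt{\varepsilon}^{-1})$ by its concrete Diophantine lower bound. First, I would verify that all the structural hypotheses of Theorem~\ref{thmsplit} carry over verbatim: the assumptions (A.1)--(A.4) are assumed directly, the regularity $k \geq 3$ is assumed, and the smallness requirement~\eqref{thr2} follows from~\eqref{thr3} (this is the same reduction as in Corollary~\ref{thmnonlinD}, since~\eqref{thr3} is built from~\eqref{thr2} precisely via the Diophantine inequality for $\omega$). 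Thus Theorem~\ref{thmsplit} applies and yields the splitting bound
\[ |a_i(\mathcal{T}_{\varepsilon},p_{\varepsilon})| \MP \sqrt{\varepsilon}\bigl(1+(\Delta_{\omega}^*(\cdot\sqrt{\varepsilon}^{-1}))^{-1}\bigr) (\Delta_{\omega}^*(\cdot\sqrt{\varepsilon}^{-1}))^{-k+2}, \quad 1 \leq i \leq d. \]

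Next I would translate the $\Delta_{\omega}^*$ factor into the Diophantine scale. Since $\omega \in \Omega_d(\gamma,\tau)$ gives $\Psi_\omega(Q) \leq \gamma^{-1}Q^{\tau}$, we have $\Delta_\omega(Q)=Q\Psi_\omega(Q)\leq \gamma^{-1}Q^{1+\tau}$, so by the very definition of $\Delta_\omega^*$ as a sup,
\[ \Delta_{\omega}^*(x) \geq (\gamma x)^{\frac{1}{1+\tau}}, \qquad x \geq \Delta_\omega(Q_\omega). \]
Applied with $x=\cdot\sqrt{\varepsilon}^{-1}$, this gives
\[ \Delta_{\omega}^*(\cdot\sqrt{\varepsilon}^{-1}) \geq (\cdot\gamma\sqrt{\varepsilon}^{-1})^{\frac{1}{1+\tau}} = (\cdot\gamma^{2}\varepsilon^{-1})^{\frac{1}{2(1+\tau)}}, \]
and equivalently the upper bound
\[ (\Delta_{\omega}^*(\cdot\sqrt{\varepsilon}^{-1}))^{-1} \MP (\cdot\gamma^{-2}\varepsilon)^{\frac{1}{2(1+\tau)}}. \]

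Substituting these two estimates into the conclusion of Theorem~\ref{thmsplit} produces, for each $1 \leq i \leq d$,
\[ |a_i(\mathcal{T}_{\varepsilon},p_{\varepsilon})| \MP \sqrt{\varepsilon}\bigl(1+(\cdot\gamma^{-2}\varepsilon)^{\frac{1}{2(1+\tau)}}\bigr) (\cdot\gamma^{-2}\varepsilon)^{\frac{k-2}{2(1+\tau)}}, \]
which is exactly the claimed inequality. There is essentially no genuine obstacle here: all non-trivial content, namely the persistence of $\mathcal{T}_{\lambda,\mu}$, the Lagrangian graph structure, and the polynomial smallness of the remainder arising from the normal form of Theorem~\ref{thmnonlin}, has already been absorbed into Theorem~\ref{thmsplit}; the corollary is a routine substitution. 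The only point worth minimal care is the chain of constants hidden behind the symbol $\MP$, which must be allowed to depend on $n,R,\omega,k,\tau$ and on the constants produced by (A.2)--(A.3), so that the factor $\cdot$ inside $(\cdot\gamma^{-2}\varepsilon)^{1/(2(1+\tau))}$ consistently denotes such a constant on each occurrence.
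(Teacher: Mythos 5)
Your proposal is correct and coincides with the paper's route: the paper states this as an ``obvious corollary'' of Theorem~\ref{thmsplit}, obtained exactly as you do by noting that \eqref{thr3} yields \eqref{thr2} and by substituting the Diophantine lower bound $\Delta_{\omega}^*(\cdot\sqrt{\varepsilon}^{-1}) \geq (\cdot\gamma^{2}\varepsilon^{-1})^{\frac{1}{2(1+\tau)}}$ into the abstract estimate. Nothing further is needed.
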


\section{Proof of the main results}\label{s3}

This section is devoted to the proof of Theorem~\ref{thmlin} (recall that Theorem~\ref{thmnonlin} follows from it exactly as in the first part of this work). The method is in principle analogous to the one used in \cite{BouI12}, but the technicalities are quite different so we need to give complete details.

\paraga The proof of Theorem~\ref{thmlin} starts with the special case $d=1$, that is when $F$ is one-dimensional, and $\kappa=1$. As we already said, in this situation the vector is in fact periodic so we shall denote it by $v$, and for any non-zero integer vector $k \in F$, we have the lower bound $|k\cdot v| \geq T^{-1}$ and so we will have $\Delta_v^*(\varepsilon^{-1})\geq(T\varepsilon)^{-1}$ in the statement of Theorem~\ref{thmlin} (or $\tau=0$ and $\gamma=T^{-1}$ in the statement of Corollary~\ref{thmlinD}) for this particular case.

Theorem~\ref{thmlin} in the case $d=1$ and for any $0\leq \kappa \leq k-1$ is essentially contained in \cite{Bou10}. However, in order to use the statement for the general case $1\leq d \leq n$, we will need a somewhat more general version.

So we introduce another parameter $\nu>0$ and we consider the Hamiltonian
\begin{equation}\label{Hper}
\begin{cases} 
H(\theta,I)=l_v(I)+s(\theta,I)+u(\theta,I), \quad (\theta,I)\in \mathcal{D}_R, \\
Tv \in \Z^n, \quad |s|_{i} \leq \nu, \quad |u|_{i} \leq \varepsilon, \quad 2 \leq i \leq k. 
\end{cases}
\end{equation}
Let us define $\delta=(2d(k-1))^{-1}R$. Then we have the following result.

\begin{proposition}\label{ana}
Let $H$ be as in \eqref{Hper}. Assume that
\begin{equation}\label{condana}
T\nu \MP 1, \quad \varepsilon \MP \nu.
\end{equation}
Then, for $R'=R-\delta$, there exists a symplectic map $\Theta \in C^{i-1}(\mathcal{D}_{R'},\mathcal{D}_{R})$ such that
\[ H\circ\Theta=l_v+s+[u]_v+u' \]
with the estimates
\[ |\Theta-\mathrm{Id}|_{i-1} \MP T\varepsilon, \quad |u'|_{i-1} \MP \varepsilon T\nu.  \]
\end{proposition}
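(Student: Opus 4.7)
The plan is a one-step Lie-transform averaging: find an auxiliary Hamiltonian $\chi$ solving the cohomological equation $\{\chi,l_v\}=u-[u]_v$, and take $\Theta=X_\chi^1$, the time-one Hamiltonian flow of $\chi$. Because $v$ is $T$-periodic with $Tv\in\Z^n$, the flow $X_v^t(\theta,I)=(\theta+tv,I)$ has period $T$, and for every $k\in\Z^n$ with $k\cdot v\neq 0$ one has the elementary bound $|k\cdot v|\geq T^{-1}$. Thus the cohomological equation can be inverted with no small-divisor loss of regularity; concretely, I would take
\[
\chi(\theta,I)=\int_0^T\Big(\tfrac{1}{2}-\tfrac{t}{T}\Big)\big(u-[u]_v\big)\big(X_v^t(\theta,I)\big)\,dt,
\]
which, either via the Fourier decomposition (where $\chi_k=-u_k/(2\pi i\,k\cdot v)$) or by a direct computation exploiting the $T$-periodicity and zero mean of $t\mapsto(u-[u]_v)\circ X_v^t$, is readily checked to satisfy $\{\chi,l_v\}=u-[u]_v$ and to obey $|\chi|_j\MP T|u|_j\leq T\varepsilon$ for every $2\leq j\leq k$.

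Under the hypotheses $T\nu\MP1$ and $\varepsilon\MP\nu$, one has $|\chi|_2\MP T\varepsilon\MP1$, so the Hamiltonian vector field $X_\chi$ is small and its time-one flow $\Theta=X_\chi^1$ is a well-defined symplectic $C^{i-1}$ diffeomorphism mapping $\mathcal{D}_{R'}=\mathcal{D}_{R-\delta}$ into $\mathcal{D}_R$, with $|\Theta-\mathrm{Id}|_{i-1}\MP|\chi|_i\MP T\varepsilon$; the single derivative loss comes solely from passing from the Hamiltonian $\chi$ to its flow. The standard Lie-series expansion then yields $H\circ\Theta=H+\{H,\chi\}+R_2$, with $R_2=\int_0^1(1-t)\{\{H,\chi\},\chi\}\circ X_\chi^t\,dt$, and using $\{l_v,\chi\}=-(u-[u]_v)$ one has $\{H,\chi\}=-(u-[u]_v)+\{s+u,\chi\}$, whence
\[
H\circ\Theta=l_v+s+[u]_v+\{s+u,\chi\}+R_2,
\]
so $u'=\{s+u,\chi\}+R_2$. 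The single bracket $\{s+u,\chi\}$ is estimated in $C^{i-1}$ by $(|s|_i+|u|_i)|\chi|_i\MP(\nu+\varepsilon)T\varepsilon\MP T\varepsilon\nu$.

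The main obstacle is obtaining the $C^{i-1}$ bound on $R_2$, because naively $\{\{H,\chi\},\chi\}$ only lives in $C^{i-2}$ and so a direct estimate loses a derivative. To recover the full $C^{i-1}$ estimate I would split $\{H,\chi\}$ as above and treat the two pieces separately. The contribution coming from $-(u-[u]_v)\in C^k$ produces $-\int_0^1(1-t)\{u-[u]_v,\chi\}\circ X_\chi^t\,dt$, a single bracket composed with the flow, controlled in $C^{i-1}$ by $\varepsilon\cdot T\varepsilon\MP T\varepsilon\nu$. For the contribution from $\{s+u,\chi\}\in C^{i-1}$ I use the identity $\{\{s+u,\chi\},\chi\}\circ X_\chi^t=\partial_t\bigl(\{s+u,\chi\}\circ X_\chi^t\bigr)$ and integrate by parts in $t$, converting the double bracket into $-\{s+u,\chi\}+\int_0^1\{s+u,\chi\}\circ X_\chi^t\,dt$; both are just single brackets composed with $X_\chi^t$, and so are bounded in $C^{i-1}$ by $T\varepsilon\nu$. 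All the composition, multiplication and Poisson bracket inequalities are handled by the technical lemmas collected in the appendix, which keep the implicit constants depending only on $n,R,\omega$ and $k$; the choice $\delta=R/(2d(k-1))$ is made so that the containment $\Theta(\mathcal{D}_{R'})\subseteq\mathcal{D}_R$ holds comfortably given $|\Theta-\mathrm{Id}|_{0}\MP T\varepsilon\MP T\nu\MP1$.
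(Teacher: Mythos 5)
Your proposal is essentially the paper's own proof: the same generating Hamiltonian (after expanding, your $\chi$ is exactly $-T\int_0^1(u-[u]_v)\circ X^t_{Tv}\,t\,dt$, the negative of the paper's), the time-one flow $\Theta=X_\chi^1$ with Lemma~\ref{tech}, and the same key device of using the cohomological equation to convert the double bracket involving $l_v$ into a single bracket so that no extra derivative is lost; your order-two Taylor expansion of all of $H$ followed by integration by parts in the $\{s+u,\chi\}$ term is just a reshuffling of the paper's mixed-order Taylor expansion and produces the identical remainder $u'=\int_0^1\{tu+(1-t)[u]_v+s,\chi\}\circ X_\chi^t\,dt$ with the same $\varepsilon T\nu$ bound. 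The only defect is a sign slip: the displayed $\chi$ (equivalently $\chi_k=-u_k/(2\pi i\,k\cdot v)$) satisfies $\{\chi,l_v\}=-(u-[u]_v)$, i.e.\ $\{l_v,\chi\}=u-[u]_v$, which is the opposite of the identity $\{l_v,\chi\}=-(u-[u]_v)$ you invoke in the expansion; replacing $\chi$ by $-\chi$ (weight $\tfrac{t}{T}-\tfrac12$) repairs this, and none of the estimates is affected.
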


Note that for the Hamiltonian~\eqref{Hper}, we consider $l_v$ as the unperturbed part, and $s+u$ as the perturbation. Since we have assumed that $\varepsilon\MP\mu$, the size of the perturbation is of order $\mu$, but as we will not alter the term $s$, the size of the ``effective" part of the perturbation is of order $\varepsilon$. The implicit constants in the above statement depend only on $n,R$ and $i$, but $i$ will eventually depend only on $k$ and $\kappa$. 

If we are only interested in the periodic case, then one may take $s=0$ in~\eqref{Hper}, $\varepsilon=\nu$ and write $u=f$ in the statement of Proposition~\ref{ana}, and this gives exactly the statement of Theorem~\ref{thmlin} in the case $\kappa=1$ if $i=k$ (with $\Phi_1=\Theta$, $g_1=0$ and $f_1=u'$). Then Theorem~\ref{thmlin} for $d=1$ and any $1 \leq \kappa \leq k$ follows by induction (the case $\kappa=0$ is of course trivial), but we won't need to prove this now as this will be proved later for an arbitrary $1 \leq d \leq n$.

\begin{proof}[Proof of Proposition~\ref{ana}]
Let us define
\[ \chi=T\int_{0}^{1}(u-[u]_v)\circ X^t_{Tv}tdt. \]    
The function $\chi$ is of class $C^{i}$, and obviously 
\[ |\chi_\kappa|_{i} \leq T|u|_{i} \leq T\varepsilon.\] 
Now let us denote by $X$ the Hamiltonian vector field associated to $\chi$, and by $X^t$ the time-$t$ map of the flow generated by $X$. Then $X$ is of class $C^{i-1}$, and obviously $|X|_{i-1} \leq |\chi|_{i}$. Moreover, by classical results on the existence of solutions of differential equations, $X^t$ is well-defined and of class $C^{i-1}$, for $|t|\leq \tau$ where $\tau \EP (|X|_{i-1})^{-1}$. By~\eqref{condana}, we may assume that $\tau>1$ and that $X^t$ sends $\mathcal{D}_{R'}$ into $\mathcal{D}_{R}$ for $|t|\leq 1$.

Now we can write
\begin{equation}\label{tay}
H \circ X^1=l_v\circ X^1+(s+u)\circ X^1  
\end{equation}
and using the general equality
\[ \frac{d}{dt}(H \circ X^t)=\{H,\chi\}\circ X^t, \]
we can apply Taylor's formula with integral remainder to the right-hand side of~\eqref{tay}, at order two for the first term and at order one for the second term, and we get
\begin{equation*}
H \circ X^1 = l_v+\{l_v,\chi\}+\int_{0}^{1}(1-t)\{\{l_v,\chi\},\chi\} \circ X^t dt + s+u+\int_{0}^{1}\{s+u,\chi\} \circ X^tdt. 
\end{equation*}
Now let us check that the equality $\{\chi,l_v\}=u-[u]_v$ holds true: we have
\[ \{\chi,l_v\}=v\cdot \partial_\theta \chi=T\int_{0}^{1}v\cdot\partial_\theta((u-[u]_v)\circ X_{Tv}^{t}) tdt=\int_{0}^{1} Tv\cdot\partial_\theta((u-[u]_v)\circ X_{Tv}^{t}) tdt \]
so using the chain rule
\[ \{\chi,l_v\}=\int_{0}^{1}\frac{d}{dt}((u-[u]_v)\circ X_{Tv}^{t}) tdt \]
and an integration by parts
\[ \{\chi,l_v\}=\left.((u-[u]_v)\circ X_{Tv}^{t})t\right\vert_{0}^{1}-\int_{0}^{1}(u-[u]_v)\circ X_{Tv}^{t}dt=u-[u]_v,  \]
where in the last equality, $(u-[u]_v)\circ X_{Tv}^{1}=u-[u]_v$ since $Tv\in\Z^n$ and the integral vanishes since it is easy to check that this integral equals $[u-[u]_v]_v=[u]_v-[u]_v=0$. So using the equality $\{\chi,l_v\}=u-[u]_v$, that can be written as $\{l_v,\chi\}+u=[u]_v$, we have 
\[ H \circ X_\kappa^1=l_v+s+[u]_v+\int_{0}^{1}(1-t)\{\{l_v,\chi\},\chi\} \circ X^t dt+\int_{0}^{1}\{s+u,\chi_\kappa\} \circ X^tdt,\]
and if we set
\[ u_{t}=tu+(1-t)[u]_v, \quad u'=\int_{0}^{1}\{u_{t}+s,\chi\} \circ X^tdt \]
and use again the equality $\{\chi,l_v\}=u-[u]_v$ we eventually obtain
\[ H \circ X^1=l_v+s+[u]_v+u'.\] 
So we define $\Theta=X^1$. 

Now let us check the estimates. First, by~\eqref{condana} we can apply Lemma~\ref{tech} to obtain
\[ |\Theta-\mathrm{Id}|_{i-1} \MP |X|_{i-1} \MP T\varepsilon.  \]
Then, we can estimate
\[ |u'|_{i-1} \leq |\{u_{t}+s,\chi\} \circ X^t|_{i-1} \]
and using the estimate~\eqref{faa1} from Appendix~\ref{app1}, this gives
\[ |u'|_{i-1} \MP |\{u_{t}+s,\chi\} |_{i-1}. \]
Then, using the estimate~\eqref{poisson} from Appendix~\ref{app1},
\[ |u'|_{i-1} \MP (|u_{t}|_{i}+|s|_{i})|\chi|_{i} \]
which finally gives 
\[ |u'|_{i-1} \MP (\varepsilon+\nu)T\varepsilon \MP \nu T\varepsilon.\]
This proves the proposition.
\end{proof}

\paraga Now we recall a result from Diophantine approximation that will be crucial to go from the special case $d=1$ to the general case $1 \leq d \leq n$.

\begin{proposition}\label{dio}
Let $\omega \in \R^n\setminus\{0\}$. For any $Q\PS 1$, there exist $d$ periodic vectors $v_1, \dots, v_d$, of periods $T_1, \dots, T_d$, such that $T_1v_1, \dots, T_dv_d$ form a $\Z$-basis of $\Z^n \cap F$ and for $j\in\{1,\dots,d\}$,
\[ |\omega-v_j|\MP(T_j Q)^{-1}, \quad 1 \MP T_j \MP \Psi_\omega(Q).\]
\end{proposition}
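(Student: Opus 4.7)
The plan is to apply Minkowski's theorems on successive minima from the geometry of numbers to the rank-$d$ lattice $\Lambda := \Z^n \cap F \subset F$ with respect to a convex body tailored to approximating the line $\R\omega$.

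First I would exploit the hypothesis $F = F_\omega$ in two ways. On the one hand, no nonzero $k \in \Lambda$ can satisfy $k \cdot \omega = 0$, since $F \cap k^\perp$ would otherwise be a proper rational subspace of $F$ still containing $\omega$. On the other hand, a Dirichlet pigeonhole argument on the values $\{k \cdot \omega : k \in \Lambda,\ |k| \leq Q\}$ yields the \emph{a~priori} lower bound $\Psi_\omega(Q) \PS Q^{d-1}$ for $Q \gg 1$, providing the supply of good approximations. Decomposing $F = \R\omega \oplus W$ orthogonally with corresponding projections $\pi_\omega, \pi_W$, I would introduce the symmetric convex body
\[ C_Q = \bigl\{x \in F : |\pi_\omega(x)| \leq c_1 \Psi_\omega(Q),\ |\pi_W(x)| \leq c_2/Q\bigr\}, \]
whose $F$-volume is $\EP \Psi_\omega(Q)/Q^{d-1}$, hence comparable to a positive multiple of $\det(\Lambda)$ by the Dirichlet bound. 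Minkowski's first and second theorems then apply: nonzero lattice points exist in $C_Q$, and the successive minima $\mu_1 \leq \cdots \leq \mu_d$ of $\Lambda$ with respect to $C_Q$ satisfy $\mu_1 \cdots \mu_d \MP 1$.

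The technical heart of the argument is extracting not merely one short vector but a $\Z$-basis $k_1, \dots, k_d$ of $\Lambda$, each contained in a fixed enlargement of $C_Q$ (so that $|\pi_\omega(k_j)| \MP \Psi_\omega(Q)$ and $|\pi_W(k_j)| \MP 1/Q$ for every $j$). I would proceed inductively: pick a primitive $k_1 \in \Lambda$ realising the first successive minimum, split $\Lambda = \Z k_1 \oplus \Lambda'$ with $\Lambda'$ a complementary rank-$(d-1)$ sublattice, and iterate on $\Lambda'$, at each step using $F = F_\omega$ (applied to the quotient image of $\omega$) to keep the newly chosen basis vector away from $W$, so that the eventual period $T_j$ is bounded below. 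This is the $d$-dimensional analogue of the classical continued-fraction convergent-and-predecessor construction.

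Finally, once such a basis is in hand, I would change signs so that each $k_j \cdot \omega > 0$ and set
\[ T_j := \frac{k_j \cdot \omega}{|\omega|^2}, \qquad v_j := \frac{k_j}{T_j}. \]
Since $k_j$ is primitive in $\Lambda$, $T_j v_j = k_j$ is the minimal integer multiple of $v_j$, so $T_j$ is indeed the period of $v_j$; moreover $k_j - T_j \omega = \pi_W(k_j)$, whence
\[ |\omega - v_j| = \frac{|\pi_W(k_j)|}{T_j} \MP \frac{1}{T_j Q}, \qquad 1 \MP T_j \MP \Psi_\omega(Q), \]
as required, and by construction $T_1v_1,\dots,T_dv_d$ form a $\Z$-basis of $\Z^n \cap F$. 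The main obstacle is the basis-extraction step: Minkowski's second theorem controls only the product of the $\mu_j$, so obtaining individual upper bounds on each basis vector requires carefully combining the Dirichlet lower bound on $\Psi_\omega$ with the product bound inside the inductive argument.
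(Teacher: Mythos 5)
Your overall strategy (a box $C_Q$ adapted to the decomposition $F=\R\omega\oplus W$, successive minima, then converting a good basis $k_1,\dots,k_d$ of $\Lambda=\Z^n\cap F$ into periodic vectors via $T_j=k_j\cdot\omega/|\omega|^2$, $v_j=k_j/T_j$) is in the same family as the argument the paper actually relies on: the paper does not prove Proposition~\ref{dio} but cites Bounemoura--Fischler, whose proof is a geometry-of-numbers duality statement. Your preliminary steps are fine: $k\cdot\omega\neq 0$ for nonzero $k\in\Lambda$, the Dirichlet bound $\Psi_\omega(Q)\PS Q^{d-1}$, Minkowski's first theorem giving one nonzero lattice point of $C_Q$, and the final conversion (primitivity of the $k_j$ gives that $T_j$ is exactly the period, $k_j-T_j\omega=\pi_W(k_j)$ gives $|\omega-v_j|\MP(T_jQ)^{-1}$, and $|k_j|\geq 1$ with $|\pi_W(k_j)|\MP Q^{-1}$ gives $T_j\PS 1$).

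However, the step you yourself flag as the technical heart is genuinely missing, and the tools you propose cannot close it. To place $d$ independent lattice vectors (hence, up to a dilation factor depending only on $d$, a $\Z$-basis) in a bounded dilate of $C_Q$ you need $\mu_d\MP 1$, and Minkowski's second theorem only gives $\mu_1\cdots\mu_d\MP 1$; this yields $\mu_d\MP 1$ only if $\mu_1\cdots\mu_{d-1}\PS 1$, which is false in general. Already for $d=2$, $\omega=(1,\alpha)$ with convergents $p_m/q_m$ and a huge partial quotient $q_{m+1}/q_m$, taking $Q\EP q_m$ one has $\Psi_\omega(Q)\EP q_{m+1}$, and the convergent vector $(q_m,p_m)$ lies in $\mu C_Q$ with $\mu\EP q_m/q_{m+1}$, which can be arbitrarily small; the product bound then only gives $\mu_2\MP q_{m+1}/q_m$, which is useless (the true bound $\mu_2\MP 1$ holds, but is proved by exhibiting the next convergent $(q_{m+1},p_{m+1})$, not by the product bound combined with the Dirichlet estimate). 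Likewise your inductive scheme is where all the work lies and is not carried out: after splitting $\Lambda=\Z k_1\oplus\Lambda'$, the approximation function of the projected frequency is not controlled by $\Psi_\omega$, and lifting a good vector of the quotient back to $\Lambda$ while keeping simultaneously $|\pi_W(k_j)|\MP Q^{-1}$ and $|\pi_\omega(k_j)|\MP\Psi_\omega(Q)$ requires correcting by multiples of $k_1$, which you have not shown preserves both bounds. This is exactly what the cited Bounemoura--Fischler proposition accomplishes, by a duality (Mahler-type, involving the polar body and the dual lattice) between the body $\{x\in F:\ |x|\leq Q,\ |x\cdot\omega|\leq\Psi_\omega(Q)^{-1}\}$ and your $C_Q$, rather than by Minkowski's second theorem alone; as it stands, your argument proves the existence of one good periodic vector, but not of a basis of them.
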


For the proof, we refer to \cite{BF12}, Proposition $2.3$. The implicit constants depend only on $d$ and $\omega$. Note that the proposition is trivial for $d=1$, that is when $F_\omega$ is one dimensional, as $\omega=v$ is then periodic and it is sufficient to let $v_1=v$.

Now a consequence of the fact that the vectors $T_1v_1, \dots, T_dv_d$ form a $\Z$-basis of $\Z^n \cap F$ is contained in the following corollary. For simplicity, we shall write $[\,\cdot\,]_{v_1,\dots,v_d}=[\cdots[\,\cdot\,]_{v_1}\cdots]_{v_d}$, where $[\,\cdot\,]_w$ has been defined for an arbitrary vector $w$ in~\eqref{ave}.

\begin{corollary}\label{cordio}
Under the assumptions of Proposition~\ref{dio}, let $l_\omega(I)=\omega\cdot I$ and $l_{v_j}(I)=v_j\cdot I$ for $j\in\{1,\dots,d\}$. For any $g\in C^{\infty}(\mathcal{D}_R)$, we have $[g]_\omega=[g]_{v_1,\dots,v_d}$ and therefore $\{g,l_\omega\}=0$ if and only if $\{g,l_{v_j}\}=0$ for any $j\in\{1,\dots,d\}$.
\end{corollary}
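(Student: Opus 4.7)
The plan is to reduce everything to a Fourier calculation in the angle variables. Writing $g\in C^{\infty}(\mathcal{D}_R)$ as $g(\theta,I)=\sum_{k\in\Z^n}g_k(I)e^{2\pi i k\cdot\theta}$ and using $X_w^t(\theta,I)=(\theta+tw,I)$ together with the elementary fact that $\frac{1}{s}\int_0^s e^{2\pi i t k\cdot w}dt$ tends to $1$ when $k\cdot w=0$ and to $0$ otherwise, one recognises the averaging operator as the Fourier projection onto integer modes orthogonal to $w$:
\[ [g]_w(\theta,I)=\sum_{\substack{k\in\Z^n\\ k\cdot w=0}}g_k(I)e^{2\pi i k\cdot\theta}. \]
Smoothness of $g$ ensures rapid decay of the $g_k$, so there are no convergence issues, and term-by-term application of $[\,\cdot\,]_w$ is justified.

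With this formula at hand, the goal is to show that both $[g]_\omega$ and $[g]_{v_1,\dots,v_d}$ coincide with the Fourier projection onto $\Z^n\cap F^\perp$. For $[g]_\omega$ the key point is the identity
\[ \{k\in\Z^n\;|\;k\cdot\omega=0\}=\Z^n\cap F^\perp, \]
which follows from the minimality of $F=F_\omega$: any such $k$ defines a rational hyperplane $\{x\;|\;k\cdot x=0\}$ containing $\omega$, which therefore contains all of $F$, forcing $k\in F^\perp$; the reverse inclusion is trivial since $\omega\in F$. For $[g]_{v_1,\dots,v_d}$, iterating the formula $d$ times gives the projection onto modes $k\in\Z^n$ satisfying $k\cdot v_j=0$ for every $j$. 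Since $T_1v_1,\dots,T_dv_d$ form a $\Z$-basis of the rank-$d$ lattice $\Z^n\cap F$, they form an $\R$-basis of $F$, and so do the $v_j$ themselves; hence this condition reads exactly $k\in F^\perp$. This yields $[g]_\omega=[g]_{v_1,\dots,v_d}$.

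For the second assertion, I invoke the equivalence $\{g,l_w\}=0\iff g=[g]_w$ recalled just after the definition of the averaging operator in~\eqref{ave}. The implication $\{g,l_\omega\}=0\Rightarrow\{g,l_{v_j}\}=0$ is then immediate: the Fourier support of $g$ lies in $\Z^n\cap F^\perp$, and since each $v_j\in F$ we have $k\cdot v_j=0$ for every $k$ in that support, whence $g=[g]_{v_j}$. Conversely, $\{g,l_{v_j}\}=0$ for every $j$ confines the Fourier support of $g$ to $\bigcap_j\{k\in\Z^n\;|\;k\cdot v_j=0\}=\Z^n\cap F^\perp$, which in turn forces $k\cdot\omega=0$ because $\omega\in F$, giving $\{g,l_\omega\}=0$. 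No serious obstacle arises; the only substantive ingredients are the lattice identity $\Z^n\cap\omega^\perp=\Z^n\cap F^\perp$ and the spanning property of the $T_jv_j$, both of which are essentially built into the definition of $F$ and the conclusion of Proposition~\ref{dio}.
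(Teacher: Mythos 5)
Your proof is correct. The paper itself contains no argument for this corollary (it only cites \cite{Bou12}, Corollary 4.2), so there is nothing to compare line by line; your Fourier-projection argument is the standard route and is essentially the same reduction the cited proof rests on. The three substantive points are all handled properly: the identification $[g]_w=\sum_{k\cdot w=0}g_k e^{2\pi i k\cdot\theta}$ (the interchange of the limit $s\to\infty$ with the sum is indeed licit, since $\bigl|\frac{1}{s}\int_0^s e^{2\pi i t k\cdot w}dt\bigr|\leq 1$ and $\sum_k|g_k|_{C^0}<\infty$ by rapid decay, so dominated convergence applies uniformly in $s$); the lattice identity $\{k\in\Z^n\,:\,k\cdot\omega=0\}=\Z^n\cap F^\perp$, which does follow from minimality of $F$ because $k^\perp$ is a rational subspace containing $\omega$ and $F$ is the intersection of all such subspaces; and the fact that $T_1v_1,\dots,T_dv_d$ being a $\Z$-basis of $\Z^n\cap F$ makes $v_1,\dots,v_d$ an $\R$-spanning set of $F$, so that $\bigcap_j\{k\cdot v_j=0\}=\Z^n\cap F^\perp$ as well (note $v_j\in F$ since $T_jv_j\in\Z^n\cap F$). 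The equivalence $\{g,l_w\}=0\iff g=[g]_w$ that you invoke is stated in the paper right after~\eqref{ave}, so the deduction of the second assertion from $[g]_\omega=[g]_{v_1,\dots,v_d}$ is complete.
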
 

For a proof, we refer to \cite{Bou12}, Corollary $4.2$.

\paraga Now we can finally prove Theorem~\ref{thmlin}, which follows easily from the proposition below.

\begin{proposition}\label{proplin}
Let $H$ be as in \eqref{Hlin}, and $\kappa\in \N$ such that $0\leq \kappa \leq k-1$. For $Q\geq 1$, assume that 
\begin{equation}\label{condlin}
Q \PS 1, \quad \varepsilon \MP \Delta_\omega(Q)^{-1},
\end{equation}
then, for $R_{d\kappa}=R-d\kappa\delta$, there exists a symplectic map $\Phi_\kappa \in C^{k-\kappa}(\mathcal{D}_{R_{d\kappa}},\mathcal{D}_R)$ such that 
\[ H \circ \Phi_\kappa = l_\omega + [f]_\omega+g_\kappa + f_\kappa, \quad \{g_\kappa,l_\omega\}=0  \]
with the estimates
\[ |\Phi_\kappa-\mathrm{Id}|_{k-\kappa} \MP Q^{-1}, \quad |g_\kappa|_{k-\kappa+1}\MP \varepsilon Q^{-1}, \quad |f_\kappa|_{k-\kappa}\MP \varepsilon Q^{-\kappa}. \]
\end{proposition}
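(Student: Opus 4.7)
My plan is to proceed by induction on $\kappa$, with the base case $\kappa=0$ trivial: take $\Phi_0=\mathrm{Id}$, $g_0=0$, and $f_0=f-[f]_\omega$, so that $|f_0|_{k}\MP\varepsilon$. For the inductive step from $\kappa$ to $\kappa+1$, I invoke Proposition~\ref{dio} to obtain $d$ periodic vectors $v_1,\dots,v_d$ satisfying $|\omega-v_j|\MP(T_jQ)^{-1}$ and $1\MP T_j\MP\Psi_\omega(Q)$, and I perform a ``round'' consisting of $d$ successive applications of Proposition~\ref{ana}, one for each $v_j$. The key arithmetic fact I will use repeatedly is that the assumption $\varepsilon\MP\Delta_\omega(Q)^{-1}=(Q\Psi_\omega(Q))^{-1}$ combined with $T_j\MP\Psi_\omega(Q)$ forces $T_j\varepsilon\MP Q^{-1}$.

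Starting from the inductive hypothesis $H\circ\Phi_\kappa=l_\omega+[f]_\omega+g_\kappa+f_\kappa$, I set $u_1=f_\kappa$ and iteratively apply Proposition~\ref{ana}: at mini-step $j$ I split the perturbation into a ``frozen'' part $s_j=(l_\omega-l_{v_j})+[f]_\omega+g_\kappa$ (all pieces commuting with $l_\omega$) and a ``mobile'' part $u_j$ to be averaged, with recursion $u_{j+1}=[u_j]_{v_j}+u'_j$, where $u'_j$ is the remainder produced by Proposition~\ref{ana}. Using $|s_j|_i\MP(T_jQ)^{-1}+\varepsilon$ together with $T_j\varepsilon\MP Q^{-1}$, I get $T_j|s_j|_i\MP Q^{-1}\MP 1$, which verifies the hypothesis of Proposition~\ref{ana}; its conclusion yields $|u'_j|\MP T_j|s_j|_i|u_j|\MP Q^{-1}|u_j|\MP Q^{-\kappa-1}\varepsilon$, where I use that $|u_j|$ is always dominated by $|f_\kappa|\MP Q^{-\kappa}\varepsilon$ since the $[u'_i]$-terms from earlier mini-steps are even smaller.

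Unwinding the recursion, after the $d$ mini-steps the transformed Hamiltonian is
\[ H\circ\Phi_\kappa\circ\Theta_1\circ\cdots\circ\Theta_d=l_\omega+[f]_\omega+g_\kappa+[f_\kappa]_{v_1,\dots,v_d}+\sum_{j=1}^{d}[u'_j]_{v_{j+1},\dots,v_d}, \]
with the convention that the $j=d$ summand is simply $u'_d$. By Corollary~\ref{cordio}, $[f_\kappa]_{v_1,\dots,v_d}=[f_\kappa]_\omega$, so defining $\Phi_{\kappa+1}=\Phi_\kappa\circ\Theta_1\circ\cdots\circ\Theta_d$, $g_{\kappa+1}=g_\kappa+[f_\kappa]_\omega$ and $f_{\kappa+1}=\sum_{j=1}^{d}[u'_j]_{v_{j+1},\dots,v_d}$ gives $\{g_{\kappa+1},l_\omega\}=0$, together with $|g_{\kappa+1}|\MP Q^{-1}\varepsilon+Q^{-\kappa}\varepsilon\MP Q^{-1}\varepsilon$ and $|f_{\kappa+1}|\MP Q^{-\kappa-1}\varepsilon$. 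The estimate on $\Phi_{\kappa+1}-\mathrm{Id}$ follows from $|\Theta_j-\mathrm{Id}|\MP T_j|u_j|\MP Q^{-1}$ for each $j$, combined with the inductive bound on $\Phi_\kappa-\mathrm{Id}$.

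The main obstacle I anticipate is the regularity count: a naive tally, in which each application of Proposition~\ref{ana} loses one derivative, gives only $\Phi_\kappa\in C^{k-d\kappa}$, whereas the statement asks for $C^{k-\kappa}$. Sharpening this relies on the observation that in Proposition~\ref{ana} it is the mobile part $u$ that enters the generating function and carries the derivative loss, while the frozen pile $s_j$ and the already averaged pieces $[f_\kappa]_{v_1,\dots,v_{j-1}}$ retain the full regularity of $f_\kappa$; one must therefore arrange the bookkeeping so that only one derivative is effectively charged per increment of $\kappa$ rather than per mini-step, leaning on the composition and Poisson-bracket estimates collected in the appendix (which, being of Faa di Bruno type, allow trading size for derivatives). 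The domain shrinkage from $R$ to $R_{d\kappa}=R-d\kappa\delta$ is immediate, since each mini-step shrinks the radius by $\delta$ and there are $d\kappa$ mini-steps in total across the $\kappa$ rounds.
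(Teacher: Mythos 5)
Your overall skeleton is the same as the paper's (outer induction on $\kappa$, inner loop over the $d$ periodic vectors from Proposition~\ref{dio}, one application of Proposition~\ref{ana} per vector, Corollary~\ref{cordio} to identify $[f_\kappa]_{v_1,\dots,v_d}$ with $[f_\kappa]_\omega$, and the arithmetic $\varepsilon\lesssim\Delta_\omega(Q)^{-1}$, $T_j\lesssim\Psi_\omega(Q)$ giving $T_j\varepsilon\lesssim Q^{-1}$), and the size estimates you write are correct. But the one point that actually makes the proposition true as stated --- losing only one derivative per increment of $\kappa$, not $d$ --- is left unproved. With your recursion $u_{j+1}=[u_j]_{v_j}+u'_j$, the remainder $u'_j$ produced at mini-step $j$, which is only of class $C^{i-1}$ and only bounded in the $C^{i-1}$ norm, is fed into the next application of Proposition~\ref{ana}; that application must then be run with regularity index $i-1$, and after the $d$ mini-steps of a single round you are down to $C^{k-\kappa-d}$, i.e.\ $C^{k-d\kappa}$ overall --- precisely the naive count you acknowledge. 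Your proposed repair (``arrange the bookkeeping'', ``Faa di Bruno allows trading size for derivatives'') is not an argument: the composition and product estimates \eqref{faa1}--\eqref{poisson} bound norms of a given order, they cannot recover a derivative that the scheme has already spent, and your displayed unwound formula for $f_{\kappa+1}=\sum_j[u'_j]_{v_{j+1},\dots,v_d}$ is exactly the object whose top admissible norm is $C^{k-\kappa-d}$, not $C^{k-\kappa-1}$.

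The missing device is structural: within a round, the remainders produced at earlier mini-steps must never be averaged again. At mini-step $j+1$ one applies Proposition~\ref{ana} not to $H_\kappa\circ\Phi^j$ but to $H_\kappa\circ\Phi^j-f_\kappa^j=l_{v_{j+1}}+s_{j+1}+[f_\kappa]_{v_1,\dots,v_j}$, which is of class $C^{k-\kappa}$ even though the two terms of the difference are separately only $C^{k-\kappa-1}$, taking the mobile part $u=[f_\kappa]_{v_1,\dots,v_j}$ (same regularity $C^{k-\kappa}$ and same size $\varepsilon Q^{-\kappa}$ as $f_\kappa$) and always the same index $i=k-\kappa$; the set-aside remainder is then re-inserted by mere composition, $f_\kappa^{j+1}=[f_\kappa]_{v_1,\dots,v_j}'+f_\kappa^j\circ\Theta_{j+1}$, which by \eqref{faa1} preserves both its $C^{k-\kappa-1}$ regularity and its size $\varepsilon Q^{-\kappa-1}$. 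This is what charges exactly one derivative per increment of $\kappa$, and it is the content of the claim proved by the inner induction in the paper; without it your argument proves the proposition only with $C^{k-d\kappa}$ in place of $C^{k-\kappa}$. A further small slip: your estimate $|g_{\kappa+1}|\lesssim\varepsilon Q^{-1}+\varepsilon Q^{-\kappa}\lesssim\varepsilon Q^{-1}$ fails for $\kappa=0$; there one must use $g_0=0$ and $[f_0]_\omega=[f-[f]_\omega]_\omega=0$, so that $g_1=0$ and the bound holds trivially.
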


\begin{proof}[Proof of Theorem~\ref{thmlin}]
We choose 
\[ Q = \Delta_\omega^*(\cdot\varepsilon^{-1}) \]
with a well-chosen implicit constant so that the second part of~\eqref{condlin} is satisfied. Proposition~\ref{proplin} with this value of $Q$ implies Theorem~\ref{thmlin}, as the first part of~\eqref{condlin} is satisfied by the threshold~\eqref{thr1} and $R_{d\kappa}\geq R/2$ for any $0 \leq \kappa \leq k-1$.
\end{proof}

So it remains to prove Proposition~\ref{proplin}. Note that here we have to modify the approach taken in \cite{BouI12}. Indeed, if we follow \cite{BouI12}, Proposition~\ref{ana} would be needed for an arbitrary $0 \leq \kappa \leq k-1$ (that is, we would need Proposition~\ref{proplin} in the special case where $d=1$), and then Proposition~\ref{proplin} for a given $0 \leq \kappa \leq k-1$ would be obtained from Proposition~\ref{ana} for the corresponding $\kappa$, by an induction on $1\leq j \leq d$, noticing that the case $j=1$ corresponds to Proposition~\ref{ana} and that we are interested in the case $j=d$. This works, but the estimates in Proposition~\ref{proplin} are worse as the loss of differentiability depends on $d$: instead of loosing $\kappa$ derivatives, we would loose $d\kappa$ derivatives, that is the estimates would apply to the $C^{k-d\kappa}$ norms instead of the $C^{k-\kappa}$ norms.

To overcome this, we will have to make a double induction. We will prove Proposition~\ref{proplin} by induction on $0 \leq \kappa \leq k-1$, starting with the fact that the statement is trivial for $\kappa=0$. Then, to prove the inductive step, that is to go from $\kappa$ to $\kappa+1$, we will make an induction on $1 \leq j \leq d$ and use the statement of Proposition~\ref{ana} (which is Proposition~\ref{proplin} for $d=1$ but only $\kappa=1$). This leads to a slightly more complicated proof, but eventually leads to better estimates.

\begin{proof}[Proof of Proposition~\ref{proplin}]
The proof goes by induction on $0 \leq \kappa \leq k-1$. For $\kappa=0$, the statement is obviously true if we let $\Phi_0$ be the identity, $g_0=0$ and $f_0=f-[f]_\omega$. So now we assume that the statement holds true for some $0 \leq \kappa \leq k-2$, and we need to show that it remains true for $\kappa+1$.

By the induction hypothesis, there exists a symplectic map $\Phi_\kappa \in C^{k-\kappa}(\mathcal{D}_{R_{d\kappa}},\mathcal{D}_R)$ such that 
\[ H_\kappa=H \circ \Phi_\kappa = l_\omega + [f]_\omega+g_\kappa + f_\kappa, \quad \{g_\kappa,l_\omega\}=0  \]
with the estimates
\[ |\Phi_\kappa-\mathrm{Id}|_{k-\kappa} \MP Q^{-1}, \quad |g_\kappa|_{k-\kappa+1}\MP \varepsilon Q^{-1}, \quad |f_\kappa|_{k-\kappa}\MP \varepsilon Q^{-\kappa}. \]
Since $Q \PS 1$ by the first part of~\eqref{condlin}, we can apply Proposition~\ref{dio}: there exist $d$ periodic vectors $v_1, \dots, v_d$, of periods $T_1, \dots, T_d$, such that $T_1v_1, \dots, T_dv_d$ form a $\Z$-basis of $\Z^n \cap F$ and for $j\in\{1,\dots,d\}$,
\[ |\omega-v_j|\MP(T_j Q)^{-1}, \quad 1 \MP T_j \MP \Psi_\omega(Q) .\] 

We claim that for all $1 \leq j \leq d$, there exists a symplectic map $\Phi^j \in C^{k-\kappa-1}(\mathcal{D}_{R_{d\kappa}-j\delta},\mathcal{D}_{R_{d\kappa}})$ such that 
\[ H_\kappa\circ\Phi^j=l_\omega+[f]_\omega+g_\kappa+[f_\kappa]_{v_1,\dots,v_j}+f_\kappa^j\]
with the estimates
\[ |\Phi^j-\mathrm{Id}|_{k-\kappa-1} \MP Q^{-1}, \quad |f_\kappa^j|_{k-\kappa-1} \MP \varepsilon Q^{-\kappa-1}.  \]
Assuming this claim, we let $\Phi_{\kappa+1}=\Phi_\kappa \circ \Phi^d$ so that
\[ H \circ \Phi_{\kappa+1} = l_\omega + [f]_\omega+g_{\kappa+1} + f_{\kappa+1} \]
with $g_{\kappa+1}=g_\kappa+[f_\kappa]_{v_1,\dots,v_d}$ and $f_{\kappa+1}=f_\kappa^d$. Then, as $R_{d\kappa}-d\delta=R_{d(\kappa+1)}$, we have $\Phi_{\kappa+1} \in C^{k-\kappa-1}(\mathcal{D}_{R_{d(\kappa+1)}},\mathcal{D}_R)$, and we can estimate
\[ |\Phi_{\kappa+1}-\mathrm{Id}|_{k-\kappa-1} \leq |\Phi_\kappa \circ \Phi^d-\Phi^d|_{k-\kappa-1}+|\Phi^d-\mathrm{Id}|_{k-\kappa-1} \]  
and as 
\[ |\Phi_\kappa \circ \Phi^d-\Phi^d|_{k-\kappa-1} \MP |\Phi_\kappa-\mathrm{Id}|_{k-\kappa-1} \]
by the estimate~\eqref{faa1} from Appendix~\ref{app1}, we obtain
\[ |\Phi_{\kappa+1}-\mathrm{Id}|_{k-\kappa-1} \MP |\Phi_\kappa-\mathrm{Id}|_{k-\kappa-1}+|\Phi^d-\mathrm{Id}|_{k-\kappa-1} \MP Q^{-1}. \]
Moreover, from Corollary~\ref{cordio}, $[f_\kappa]_{v_1,\dots,v_d}=[f_\kappa]_{\omega}$ and therefore $\{g_{\kappa+1},l_{\omega}\}=\{g_{\kappa},l_{\omega}\}+\{[f_\kappa]_{\omega},l_{\omega}\}=0$. Concerning the estimates for $g_{\kappa+1}$, we have to distinguish whether $1\leq \kappa \leq k-1$ or $\kappa=0$. In the first case, we have
\[ |g_{\kappa+1}|_{k-\kappa}\MP |g_{\kappa}|_{k-\kappa}+|[f_\kappa]_\omega|_{k-\kappa} \MP (\varepsilon Q^{-1}+\varepsilon Q^{-\kappa}) \MP \varepsilon Q^{-1}  \]
while in the second case, $g_1=0$, so that the above estimate is also true. To see that $g_1=0$, recall that $g_1=g_0+[f_0]_\omega$, but on the one hand, $g_0=0$, and the other hand, $f_0=f-[f]_\omega$ hence $[f_0]_\omega=0$ and so $g_1=0$. Therefore the statement remains true for $\kappa+1$, provided the claim holds true.

So to conclude the proof of the proposition, we need to prove the claim and we will proceed by induction on $1 \leq j \leq d$. First, for $j\in\{1,\dots,d\}$, let us define 
\[ \tilde{s}_j=l_\omega-l_{v_j}, \quad \nu_j \EP (T_j Q)^{-1} \]
with a suitable implicit constant so that $|\tilde{s}_j|_{\alpha,L} \leq \nu_j$. Note that $l_\omega=l_{v_j}+\tilde{s}_j$, and that $T_j\nu_j \EP Q^{-1}$. Observe that \eqref{condlin} implies \eqref{condana} for any $1 \leq j \leq d$: indeed, $Q \PS 1$ implies $T_j\nu_j \EP Q^{-1} \MP 1$, whereas 
\[ \varepsilon \MP \Delta_\omega(Q)^{-1} \EP (Q\Psi_\omega(Q))^{-1}\MP (T_jQ)^{-1} \EP \nu_j. \] 
For $j=1$, the statement follows from Proposition~\ref{ana}. Indeed, we can write
\[ H_\kappa=l_{v_1}+s_{1}+f_\kappa, \quad s_1=\tilde{s_1}+[f]_\omega+g_\kappa \]
and since $Q \PS 1$ and $\varepsilon \MP \nu_1$ we have the estimate
\[ |s_1|_{k-\kappa} \MP (\nu_1+\varepsilon+\varepsilon Q^{-1}) \MP \nu_1. \]
Recall that $|f_\kappa|_{k-\kappa} \MP \varepsilon Q^{-\kappa}$, and as $T_1\nu_1 \MP 1$ and $\varepsilon Q^{-\kappa} \leq \varepsilon \MP \nu_1$, we can therefore apply Proposition~\ref{ana} to the Hamiltonian $H_\kappa$, defined and of class $C^{k-\kappa}$ on $\mathcal{D}_{R_{d\kappa}}$, with $i=k-\kappa$, $v=v_1$, $s=s_1$ and $u=f_\kappa$, and we find a symplectic map $\Theta_1 \in C^{k-\kappa-1}(\mathcal{D}_{R_{d\kappa}-\delta},\mathcal{D}_{R_{d\kappa}})$ such that
\[ H_\kappa\circ\Theta_1=l_{v_1}+s_1+[f_\kappa]_{v_1}+f_\kappa' \]
with the estimates
\[ |\Theta_1-\mathrm{Id}|_{k-\kappa-1} \MP T_1\varepsilon \MP T_1\nu_1 \PE Q^{-1} \] 
and
\[ |f_\kappa'|_{k-\kappa-1} \MP \varepsilon Q^{-\kappa} T_1\nu_1 \PE \varepsilon Q^{-\kappa} Q^{-1} =  \varepsilon Q^{-\kappa-1}.  \]   
So we define
\[ \Phi^1=\Theta_1, \quad f^1_\kappa=f_\kappa' \]
to obtain
\[ H_\kappa\circ\Phi^1=l_{v_1}+\tilde{s}_1+[f]_\omega+g_\kappa+[f_\kappa]_{v_1}+f_\kappa^1=l_\omega+[f]_\omega+g_\kappa+[f_\kappa]_{v_1}+f_\kappa^1.\]
Hence the statement for $j=1$ is true. 

So now assume the statement holds true for some $1 \leq j \leq d-1$, and let us prove it is true for $j+1$. By the inductive assumption, there exists a symplectic map $\Phi^j \in C^{k-\kappa-1}(\mathcal{D}_{R_{d\kappa}-j\delta},\mathcal{D}_{R_{d\kappa}})$ such that 
\[ H_\kappa\circ\Phi^j=l_\omega+[f]_\omega+g_\kappa+[f_\kappa]_{v_1,\dots,v_j}+f_\kappa^j\]
with the estimates
\[ |\Phi^j-\mathrm{Id}|_{k-\kappa-1} \MP Q^{-1}, \quad |f_\kappa^j|_{k-\kappa-1} \MP \varepsilon Q^{-\kappa-1}.  \]
We can write
\begin{eqnarray*}
H_\kappa\circ\Phi^j - f_\kappa^j & = & l_\omega+[f]_\omega+g_\kappa+[f_\kappa]_{v_1,\dots,v_j} \\
& = & l_{v_{j+1}}+\tilde{s}_{j+1}+[f]_\omega+g_\kappa+[f_\kappa]_{v_1,\dots,v_j} \\
& = & l_{v_{j+1}}+s_{j+1}+[f_\kappa]_{v_1,\dots,v_j}
\end{eqnarray*}
where $s_j=\tilde{s}_j+[f]_\omega+g_\kappa$. The point is that even though both $H_\kappa\circ\Phi^j$ and $f_\kappa^j$ are of class $C^{k-\kappa-1}$, their difference is of class $C^{k-\kappa}$, as one easily sees from the expressions above. So exactly as for $j=1$, we can check that Proposition~\ref{ana} can be applied to the Hamiltonian $H_\kappa\circ\Phi^j - f_\kappa^j$, defined and of class $C^{k-\kappa}$ on $\mathcal{D}_{R_{d\kappa}-j\delta}$, still with $i=k-\kappa$ but this time with $v=v_{j+1}$, $s=s_{j+1}$ and $u=[f_\kappa]_{v_1,\dots,v_j}$, and we find a symplectic map $\Theta_{j+1} \in C^{k-\kappa-1}(\mathcal{D}_{R_{d\kappa}-(j+1)\delta},\mathcal{D}_{R_{d\kappa}-j\delta})$ such that
\[ (H_\kappa \circ \Phi^j-f_\kappa^j)\circ\Theta_{j+1}=l_{v_{j+1}}+s_{j+1}+[f_\kappa]_{v_1,\dots,v_{j+1}}+[f_\kappa]_{v_1,\dots,v_j}' \]
with the estimates
\[ |\Theta_{j+1}-\mathrm{Id}|_{k-\kappa-1} \MP T_{j+1}\varepsilon \MP T_{j+1}\nu_{j+1} \PE Q^{-1} \] 
and
\[ |[f_\kappa]_{v_1,\dots,v_j}'|_{k-\kappa-1} \MP \varepsilon Q^{-\kappa} T_{j+1}\nu_{j+1} \PE \varepsilon Q^{-\kappa} Q^{-1} =  \varepsilon Q^{-\kappa-1}.  \]    
So we define
\[ \Phi^{j+1}=\Phi^j \circ \Theta_{j+1}, \quad f_{\kappa}^{j+1}=[f_\kappa]_{v_1,\dots,v_j}'+f_\kappa^j \circ \Phi^{j+1} \]
to obtain
\[ H_\kappa\circ\Phi^{j+1}=l_\omega+[f]_\omega+g_\kappa+[f_\kappa]_{v_1,\dots,v_{j+1}}+f_\kappa^{j+1}.\]
We have $\Phi^{j+1} \in C^{k-\kappa-1}(\mathcal{D}_{R_{d\kappa}-(j+1)\delta},\mathcal{D}_{R_{d\kappa}})$, and exactly as for $\Phi_{\kappa+1}$, using the estimate~\eqref{faa1} from Appendix~\ref{app1}, we obtain
\[ |\Phi^{j+1}-\mathrm{Id}|_{k-\kappa-1} \MP  |\Phi^j-\mathrm{Id}|_{k-\kappa-1}+|\Theta_{j+1}-\mathrm{Id}|_{k-\kappa-1} \MP Q^{-1} \]
and also
\[ |f_{\kappa}^{j+1}|_{k-\kappa-1} \MP |[f_\kappa]_{v_1,\dots,v_j}'|_{k-\kappa-1} + |f_\kappa^j|_{k-\kappa-1} \MP \varepsilon Q^{-\kappa-1}. \]
So this completes the induction on $1\leq j \leq d$, which itself completes the induction on $0 \leq \kappa \leq k-1$, and therefore this ends the proof of the proposition.

\end{proof}

\bigskip

{\it Acknowledgments.} I am grateful to Vadim Kaloshin and Marcel Guardia for asking a question related to this work and which was the initial motivation, and to Jean-Pierre Marco for asking already a long time ago to prove upper bounds for the splitting of invariant manifolds for non-analytic systems. I am much indebted to Stéphane Fischler, as this work crucially uses results from our previous joint work. This paper has been written while the author was a Member at the Institute for Advanced Study in Princeton and a IPDE laureate at the Centre de Recerca Matemàtica in Barcelona, I thank both institutions for their support.

\appendix

\section{Technical estimates}\label{app1}

Let us begin by recalling some elementary estimates. First if $f\in C^k(\mathcal{D}_R)$, then for $|l|\leq j$, $\partial^l f \in C^{k-j}(\mathcal{D}_R)$ and obviously
\begin{equation}\label{eg1}
|\partial^l f|_{k-j}\leq |f|_k.
\end{equation}
In particular, this implies that if $f\in C^k(\mathcal{D}_R)$, then its Hamiltonian vector field $X_f$ is of class $C^{k-1}$  and  
\[ |X_f|_{k-1}\leq |f|_k. \]
Then, given two functions $f,g\in C^k(\mathcal{D}_R)$, the product $fg$ belongs to $C^k(\mathcal{D}_R)$ and by the Leibniz formula
\begin{equation}\label{eg2}
|fg|_{k} \MP |f|_{k}|g|_{k}.
\end{equation}
By~\eqref{eg1} and~\eqref{eg2}, the Poisson Bracket $\{f,g\}$ belongs to $C^{k-1}(\mathcal{D}_R)$ and
\begin{equation}\label{poisson}
|\{f,g\}|_{k-1} \MP |f|_{k}|g|_{k}.
\end{equation}
The above implicit constants depend only on $n$ and $k$.

We shall also use many times estimates which follows from the formula of Faà di Bruno (see \cite{AR67} for example), which gives bounds of the form
\begin{equation}\label{faa1}
|F\circ G|_{k} \MP |F|_{k}|G|^{k}_{k}
\end{equation}
where $F,G$ are vector-valued functions $F \in C^k(\mathcal{D}_{R_2},\mathcal{D}_{R_3})$ and $G \in C^k(\mathcal{D}_{R_1},\mathcal{D}_{R_2})$, for some positive numbers $R_1,R_2$ and $R_3$. This formula also gives the following estimate
\begin{equation}\label{faa2}
|F\circ G|_{k} \MP |F|_{1}|G|^{k}_{k}+|F|_{k}|G|^{k}_{k-1} 
\end{equation}
that we will use below.

Now if $f\in C^k(\mathcal{D}_R)$, $k\geq 2$, the Hamiltonian vector field $X_f$ is of class $C^{k-1}$ and so is the time-$t$ map $X_f^t$ of the vector field $X_f$, when it exists. For a given $0<\delta<1$, assuming that $|X_f|_{k-1}$ is small enough with respect to $\delta$ and $R$, it follows from general results on ordinary differential equations that for $|t|\leq 1$,  
\[ X_{f}^{t} : \mathcal{D}_{R-\delta} \longrightarrow \mathcal{D}_{R} \]
is a well-defined $C^{k-1}$-embedding.

We will need to prove that the $C^{k-1}$ norm of the distance of $X_{f}^{1}$ to the identity is bounded (up to constants depending on $k$ and on the domain) by the $C^{k-1}$ norm of the vector field $X_f$ (which itself is bounded by the $C^k$ norm of $f$). Let us state this as a lemma, the proof of which is a simple adaptation of Lemma $3.15$ in \cite{DH09}.

\begin{lemma}\label{tech} 
Let $f\in C^{k}(\mathcal{D}_R)$, $0<\delta<1$, and assume that
\begin{equation}\label{condtech}
|X_f|_{k-1} \MP 1.  
\end{equation}
Then, with the previous notations, we have 
\[ |X_f^1-\mathrm{Id}|_{k-1} \MP |X_f|_{k-1}. \]
\end{lemma}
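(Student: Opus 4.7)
The plan is to reduce everything to a simple integral equation for the flow, then control $C^{k-1}$ norms by induction on the differentiation order.

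First I would write, for $t \in [0,1]$,
\[ X_f^t - \mathrm{Id} = \int_0^t X_f \circ X_f^s \, ds, \]
so that $|X_f^1-\mathrm{Id}|_{k-1} \leq \sup_{s \in [0,1]} |X_f \circ X_f^s|_{k-1}$. The goal therefore becomes to bound the $C^{k-1}$ norm of $X_f \circ X_f^s$ by $|X_f|_{k-1}$, uniformly in $s$, and the natural route is through the Faà di Bruno estimate~\eqref{faa2}, which requires a prior control on $|X_f^s|_{k-2}$.

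Next I would prove the stronger intermediate statement by induction on $j$: for every $0 \leq j \leq k-1$ and every $t \in [0,1]$,
\[ |X_f^t - \mathrm{Id}|_j \MP |X_f|_j. \]
The base case $j=0$ follows directly from $|X_f^t - \mathrm{Id}|_0 \leq \int_0^t |X_f|_0\, ds \leq |X_f|_0$. For the inductive step, write $\psi_s = X_f^s - \mathrm{Id}$, so that $X_f^s = \mathrm{Id}+\psi_s$ and $|X_f^s|_{j-1} \leq 1 + |\psi_s|_{j-1} \MP 1$ by the inductive hypothesis together with the smallness assumption~\eqref{condtech}. Applying~\eqref{faa2} with $F = X_f$ and $G = X_f^s$ at order $j$ and inserting this bound yields
\[ |X_f \circ X_f^s|_j \MP |X_f|_1 \bigl(1+|\psi_s|_j\bigr)^j + |X_f|_j. \]
Integrating in $s$ gives a nonlinear Gronwall-type inequality for $\eta_j(t) = |\psi_t|_j$.

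The main obstacle is closing this nonlinear inequality; I would handle it by a short bootstrap argument. Since $|X_f|_1 \leq |X_f|_{k-1} \MP 1$ is small, a continuity argument on $[0,1]$ first shows that $\eta_j(s) \leq 1$ uniformly, whence $(1+\eta_j(s))^j \MP 1$ and the inequality reduces to the linear bound
\[ \eta_j(t) \MP |X_f|_1 + |X_f|_j \MP |X_f|_j, \]
using $|X_f|_1 \leq |X_f|_j$. Finally, specializing to $j=k-1$ and inserting this uniform bound back into the integral formula for $X_f^1 - \mathrm{Id}$ yields $|X_f^1 - \mathrm{Id}|_{k-1} \MP |X_f|_{k-1}$, proving the lemma.
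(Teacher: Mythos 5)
Your proof is correct and takes essentially the same route as the paper: the integral relation $X_f^t=\mathrm{Id}+\int_0^t X_f\circ X_f^s\,ds$, the composition estimates \eqref{faa1}--\eqref{faa2}, and an induction on the order of differentiation up to $k-1$, with the smallness assumption \eqref{condtech} used to close the self-referential estimate. The only (minor) difference is how that estimate is closed: the paper first proves the uniform bound $\sup_{0\leq s\leq 1}|X_f^s|_i \MP 1$ by induction on $i$, absorbing the top-order term which enters linearly, and then applies \eqref{faa1} once to conclude, whereas you track the bound $|X_f^t-\mathrm{Id}|_j \MP |X_f|_j$ directly and dispose of the nonlinear term $(1+\eta_j)^j$ by a continuity/bootstrap argument in $t$; both variants are valid.
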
 

Note that the implicit constant depends on $k$, $R$ and $\delta$, and that we will use this statement for a value of $\delta$ depending only on $d$, $R$ and $k$.

\begin{proof}
We have the relation
\begin{equation}\label{rel}
X_f^t=\mathrm{Id}+\int_{0}^{t}X_f \circ X_f^s ds,
\end{equation}
hence by the estimate~\eqref{faa1}, is is enough to prove that for any $0\leq s \leq 1$, we have
\begin{equation}\label{est1}
|X_f^s|_{k-1} \MP 1. 
\end{equation}

Let $j=k-1$, and for $1 \leq i \leq j$, let us write 
\[ a_i=\sup_{0\leq s \leq 1}|X_f^s|_i, \quad b_i=|X_f|_{i}. \]
Using~\eqref{rel} and the estimate~\eqref{faa2}, we easily obtain
\[ a_1 \MP (1+b_1a_1)  \]
and
\[ a_i \MP (1+b_1a_i+b_ia_{i-1}^i). \]
Now using~\eqref{condtech}, $b_i \MP 1$ for $1 \leq i \leq j$, therefore by induction we have $a_i \MP 1$ for $1 \leq i \leq j$, and $a_j \MP 1$ implies~\eqref{est1}.  
\end{proof}

\addcontentsline{toc}{section}{References}
\bibliographystyle{amsalpha}
\bibliography{GNFC2}

\end{document}